\pdfoutput=1

\documentclass[a4paper,12pt]{article}

\usepackage[french]{babel}
\usepackage[utf8]{inputenc}
\usepackage[T1]{fontenc}
\usepackage{lmodern}
\usepackage{amsmath,amssymb,amsthm}
\usepackage[linktocpage=false]{hyperref}

\DeclareFontFamily{OMX}{lmex}{}
\DeclareFontShape{OMX}{lmex}{m}{n}{<->lmex10}{}

\theoremstyle{plain}
\newtheorem{theo}{Théorème}[section]
\newtheorem{prop}[theo]{Proposition}
\newtheorem{conj}[theo]{Conjecture}
\newtheorem{coro}[theo]{Corollaire}
\newtheorem{lemm}[theo]{Lemme}

\theoremstyle{definition}
\newtheorem{defi}[theo]{Définition}
\theoremstyle{remark}
\newtheorem{rema}[theo]{Remarque}
\newtheorem{exem}[theo]{Exemple}

\renewcommand{\leq}{\leqslant}
\renewcommand{\geq}{\geqslant}

\DeclareMathOperator{\card}{card}
\DeclareMathOperator{\val}{val}
\DeclareMathOperator{\Hom}{Hom}
\DeclareMathOperator{\Ext}{Ext}
\DeclareMathOperator{\Ind}{Ind}
\DeclareMathOperator{\cind}{c-ind}
\DeclareMathOperator{\Ord}{Ord}
\DeclareMathOperator{\Lie}{Lie}
\DeclareMathOperator{\Gal}{Gal}
\DeclareMathOperator{\Fil}{Fil}
\DeclareMathOperator{\Gr}{Gr}
\DeclareMathOperator{\Nrm}{Nrm}
\DeclareMathOperator{\Res}{Res}
\DeclareMathOperator{\soc}{soc}
\DeclareMathOperator{\rad}{rad}
\DeclareMathOperator{\detfr}{d\acute{e}t}
\newcommand{\llbrack}{[\![}
\newcommand{\rrbrack}{]\!]}
\newcommand{\dfn}{\overset{\text{\upshape déf}}{=}}
\newcommand{\iso}{\overset{\sim}{\longrightarrow}}
\newcommand{\h}{\overset{\mathrm{H}}{\cdot}}
\newcommand{\sms}{\mathrm{ss}}
\newcommand{\ord}{\mathrm{ord}}
\newcommand{\fin}{\mathrm{fin}}
\newcommand{\N}{\mathbb{N}}
\newcommand{\Z}{\mathbb{Z}}
\newcommand{\Q}{\mathbb{Q}}
\newcommand{\Fp}{\mathbb{F}_p}
\newcommand{\Fpbar}{\overline{\Fp}}
\newcommand{\Zp}{\Z_p}
\newcommand{\Qp}{\Q_p}
\newcommand{\Qpbar}{\overline{\Qp}}
\newcommand{\Oe}{\mathcal{O}_{\!E}}
\newcommand{\pe}{\varpi_{\!E}}
\newcommand{\ke}{k_E}
\newcommand{\A}[1]{\Oe/\pe^{#1}\Oe}
\newcommand{\GL}{\mathrm{GL}}
\newcommand{\GSp}{\mathrm{GSp}}
\newcommand{\Gd}{\mathrm{G}_2}
\newcommand{\oma}{\omega^{-1} \circ \alpha}
\newcommand{\epsth}{\cdot (\varepsilon^{-1} \circ \theta)}
\newcommand{\omth}{\cdot (\omega^{-1} \circ \theta)}
\newcommand{\Pt}{\widetilde{P}}
\newcommand{\Lt}{\widetilde{L}}
\newcommand{\Nt}{\widetilde{N}}
\newcommand{\nt}{\widetilde{n}}
\newcommand{\Ntz}{\Nt_0}
\newcommand{\Ntlz}[1][l]{\Nt_{#1,0}}
\newcommand{\NPz}{N_{P,0}}
\newcommand{\NPw}{N_{P,w}}
\newcommand{\NPwz}{N_{P,w,0}}
\newcommand{\Ns}[1]{N_{s_{#1} \dots s_1}}
\newcommand{\NPs}[1]{N_{P,s_{#1} \dots s_1 w_L}}
\newcommand{\NPsz}[1]{N_{P,s_{#1} \dots s_1 w_L,0}}
\newcommand{\NLw}{N_{L,w_L}}
\newcommand{\Ut}{\widetilde{U}}
\newcommand{\Vwz}{V_{w,0}}
\newcommand{\Hc}[1][\bullet]{\mathrm{H}^{#1}}
\newcommand{\OrdF}[1][P]{\Ord_{#1(F)}}

\newcommand{\HOrdF}[1][\bullet]{\Hc[#1]\!\Ord_{P(F)}}
\newcommand{\HOrdQp}[1][P]{\Hc[1]\!\Ord_{#1(\Qp)}}
\newcommand{\IndF}[1][B^-]{\Ind_{#1(F)}^{G(F)}}
\newcommand{\IndFL}{\Ind_{B_L^-(F)}^{L(F)}}
\newcommand{\IndQp}[1][B^-]{\Ind_{#1(\Qp)}^{G(\Qp)}}
\newcommand{\IndQpL}{\Ind_{B_L^-(\Qp)}^{L(\Qp)}}
\newcommand{\Inda}{\Ind_{B_\alpha^-(\Qp)}^{G_\alpha(\Qp)}}
\newcommand{\Indd}{\Ind_{\left(\begin{smallmatrix} * & 0 \\ * & * \end{smallmatrix}\right)}^{\GL_2(\Qp)}}
\newcommand{\cindF}{\cind_{B^-(F)}}
\newcommand{\cindFL}{\cind_{B_L^-(F)}}
\newcommand{\cindQp}{\cind_{B^-(\Qp)}}
\newcommand{\C}{\mathcal{C}}
\newcommand{\Clis}{\C^\infty}
\newcommand{\Clisc}{\Clis_\mathrm{c}}
\newcommand{\Ct}{\widetilde{\C}}
\newcommand{\E}{\mathcal{E}}
\newcommand{\Et}{\widetilde{\E}}
\newcommand{\Wt}{\widetilde{W}}
\newcommand{\wt}{\widetilde{w}}
\newcommand{\dw}{\dot{w}}
\newcommand{\dwt}{\dot{\wt}}
\newcommand{\ds}{\dot{s}}
\newcommand{\alphat}{\widetilde{\alpha}}

\hypersetup{pdfinfo={
	Title={Sur une conjecture de Breuil-Herzig},
	Author={Julien Hauseux},
	Subject={22E50},
	Keywords={programme de Langlands p-adique, groupes réductifs p-adiques, séries principales, extensions, parties ordinaires dérivées, filtration de Bruhat}
}}

\title{Sur une conjecture de Breuil-Herzig}
\author{Julien Hauseux}
\date{}

\begin{document}

\maketitle

\begin{abstract}
Soit $G$ un groupe réductif $p$-adique de centre connexe et de groupe dérivé simplement connexe.
Nous montrons que certaines \og chaînes \fg{} de séries principales de $G$ n'existent pas et nous établissons plusieurs propriétés de la construction $\Pi(\rho)^\mathrm{ord}$ de Breuil-Herzig.
En particulier, nous obtenons une caractérisation naturelle de cette dernière et nous démontrons une conjecture de Breuil-Herzig.
Pour cela, nous calculons le $\delta$-foncteur $\mathrm{H^\bullet Ord}_P$ des parties ordinaires dérivées d'Emerton relatif à un sous-groupe parabolique $P$ de $G$ sur une série principale.
Nous énonçons une nouvelle conjecture sur les extensions entre représentations lisses modulo $p$ de $G$ obtenues par induction parabolique à partir de représentations supersingulières de sous-groupes de Levi de $G$ et nous la démontrons pour les extensions par une série principale.
\end{abstract}

\tableofcontents

\section{Introduction}

\subsection*{Contexte}

Soient $F$ une extension finie de $\Qp$ et $G$ un groupe algébrique connexe réductif déployé sur $F$.
On fait les hypothèses suivantes sur $G$ : son centre est connexe et son groupe dérivé est simplement connexe (par exemple $G=\GL_n$ ou $G=\GSp_{2n}$).
On note $\widehat{G}$ le groupe dual de $G$.

Une correspondance de Langlands $p$-adique (resp. modulo $p$) devrait associer à toute représentation continue $\rho : \Gal(\Qpbar/F) \to \widehat{G}(E)$ avec $E$ une extension finie de $\Qp$ (resp. $\overline{\rho} : \Gal(\Qpbar/F) \to \widehat{G}(\ke)$ avec $\ke$ le corps résiduel de $E$), une ou plusieurs représentations continues unitaires $\Pi(\rho)$ de $G(F)$ sur des $E$-espaces de Banach (resp. une ou plusieurs représentations lisses $\Pi(\overline{\rho})$ de $G(F)$ sur des $\ke$-espaces vectoriels).

Lorsque $F=\Qp$ et $\rho$ (resp. $\overline{\rho}$) est ordinaire (c'est-à-dire à valeurs dans un sous-groupe de Borel) et suffisamment générique, Breuil et Herzig (\cite{BH}) construisent une représentation continue unitaire $\Pi(\rho)^\mathrm{ord}$ de $G(\Qp)$ sur un $E$-espace de Banach (resp. une représentation lisse $\Pi(\overline{\rho})^\mathrm{ord}$ de $G(\Qp)$ sur un $\ke$-espace vectoriel) qui devrait être la plus grande sous-représentation fermée de $\Pi(\rho)$ (resp. $\Pi(\overline{\rho})$) dont les constituants irréductibles sont des sous-quotients de séries principales.
Ils conjecturent l'unicité des facteurs directs indécomposables de cette représentation étant donnés les gradués de leurs filtrations par le socle (\cite[Conjecture 3.5.1]{BH}).

\subsection*{Principaux résultats}

Soient $B \subset G$ un sous-groupe de Borel et $T \subset B$ un tore maximal déployé.
On note $B^- \subset G$ le sous-groupe de Borel opposé à $B$ par rapport à $T$, $W$ le groupe de Weyl de $(G,T)$, $\Phi^+$ les racines positives de $(G,B,T)$ et $\Delta$ les racines simples de $\Phi^+$.
Pour tout $\alpha \in \Phi^+$, on note $\alpha^\vee$ la coracine correspondante et $s_\alpha \in W$ la réflexion correspondante.
On note $\theta$ la somme des poids fondamentaux (bien définie à un caractère algébrique de $G$ près, voir \cite[Proposition 2.1.1]{BH}), $\varepsilon : F^\times \to \Zp^\times$ le caractère cyclotomique $p$-adique et $\Oe$ l'anneau des entiers de $E$.

\medskip

Lorsque $F=\Qp$, nous montrons que certaines \og chaînes \fg{} de trois séries principales (en particulier toutes celles sans multiplicité) n'existent pas dans la catégorie des représentations continues unitaires admissibles de $G(\Qp)$ sur $E$ (Théorème \ref{theo:chaine}).
Le résultat analogue modulo $p$ (c'est-à-dire dans la catégorie des représentations lisses admissibles de $G(\Qp)$ sur $\ke$) se démontre de façon analogue.

\begin{theo} \label{theo:1}
Soient $\chi, \chi', \chi'' : T(\Qp) \to \Oe^\times \subset E^\times$ des caractères continus unitaires.
On suppose $\chi \neq \chi''$ et si $\chi'=\chi$ ou $\chi'=\chi''$, alors $\chi'$ faiblement générique (Définition \ref{defi:gen}).
Alors il n'existe pas de représentation continue unitaire admissible de $G(\Qp)$ sur $E$ ayant $\IndQp \chi \epsth$ pour socle, $\IndQp \chi'' \epsth$ pour cosocle et $\IndQp \chi' \epsth$ pour unique constituant irréductible intermédiaire.
\end{theo}

Le théorème \ref{theo:1}, conjointement avec \cite[Théorème 1.1]{JH}, nous permet d'établir certaines propriétés de la construction de Breuil-Herzig.
Les résultats analogues modulo $p$ se démontrent de façon analogue.
Si l'on se donne une représentation galoisienne ordinaire générique $\rho$, alors $\Pi(\rho)^\ord$ est définie à partir d'un caractère continu unitaire $\chi_\rho : T(\Qp) \to \Oe^\times \subset E^\times$ et d'un sous-ensemble fermé $\Psi_\rho \subset \Phi^+$ qui lui sont associés (voir \cite[§~3.3]{BH} où $\Psi_\rho$ est noté $C_\rho$).

Plus généralement, on se donne un caractère continu unitaire $\chi : T(\Qp) \to \Oe^\times \subset E^\times$ et un sous-ensemble fermé $\Psi \subset \Phi^+$.
On note $W_\Psi \subset W$ le sous-ensemble constitué des éléments $w \in W$ vérifiant $w(\Psi) \subset \Phi^+$.
On suppose que $\chi$ est générique (c'est-à-dire $\chi \circ \alpha^\vee \neq 1$ pour tout $\alpha \in \Phi^+$) et que pour tout $w_\Psi \in W_\Psi$ et pour tout $I \subset \Delta \cap w_\Psi(\Psi)$ constitué de racines deux à deux orthogonales, la représentation
\begin{equation} \label{const}
\IndQp \big( \prod_{\alpha \in I} s_\alpha \big) w_\Psi(\chi) \epsth
\end{equation}
est topologiquement irréductible.
D'après \cite[Conjecture 3.1.2]{BH}, ces représentations devraient être topologiquement irréductibles lorsque $\chi \circ \alpha^\vee \neq \varepsilon^{\pm1}$ pour tout $\alpha \in \Phi^+$.
La conjecture analogue modulo $p$ est vraie d'après \cite[Théorème 4]{Oll} lorsque $G=\GL_n$ et \cite[Theorem 1.3]{Abe} dans le cas général déployé.
En particulier si la réduction $\overline{\chi} : T(\Qp) \to \ke^\times$ vérifie $\overline{\chi} \circ \alpha^\vee \neq \omega^{\pm 1}$ pour tout $\alpha \in \Phi^+$, alors ces représentations sont topologiquement irréductibles.
Par la construction de Breuil-Herzig, on obtient une représentation continue unitaire admissible de $G(\Qp)$ sur $E$
\begin{equation*}
\Pi(\chi)_\Psi = \bigoplus_{w_\Psi \in W_\Psi} \Pi(\chi)_{\Psi,w_\Psi}
\end{equation*}
où pour tout $w_\Psi \in W_\Psi$, $\Pi(\chi)_{\Psi,w_\Psi}$ est indécomposable, de longueur finie, sans multiplicité et ses constituants irréductibles sont exactement les représentations \eqref{const} avec $I \subset \Delta \cap w_\Psi(\Psi)$ parmi les sous-ensembles de racines deux à deux orthogonales.
On démontre une caractérisation de ces facteurs directs indécomposables (Théorème \ref{theo:ord}).

\begin{theo} \label{theo:2}
Soient $\chi : T(\Qp) \to \Oe^\times \subset E^\times$ un caractère continu unitaire générique, $\Psi \subset \Phi^+$ un sous-ensemble fermé et $w_\Psi \in W_\Psi$.
On suppose que pour tout $I \subset \Delta \cap w_\Psi(\Psi)$ constitué de racines deux à deux orthogonales, la représentation \eqref{const} est topologiquement irréductible.
Alors $\Pi(\chi)_{\Psi,w_\Psi}$ est la plus grande représentation continue unitaire admissible de $G(\Qp)$ sur $E$ dont le socle est $\IndQp w_\Psi(\chi) \epsth$ et dont les autres sous-quotients irréductibles sont des séries principales distinctes de $\IndQp w'_\Psi(\chi) \epsth$ pour tout $w'_\Psi \in W_\Psi$.
\end{theo}

Le théorème \ref{theo:2} avec $\Psi=\Phi^+$ donne une classification de toutes les représentations continues unitaires admissibles de $G(\Qp)$ sur $E$ dont le socle est $\IndQp \chi \epsth$ et dont les autres sous-quotients irréductibles sont des séries principales distinctes du socle : ce sont exactement les sous-représentations fermées de $\Pi(\chi)_{\Phi^+,1}$.
De plus, on déduit du théorème \ref{theo:2} que pour tout $w_\Psi \in W_\Psi$, la représentation $\Pi(\chi)_{\Psi,w_\Psi}$ est unique étant donnée son socle et ses constituants irréductibles (avec multiplicité un).
En particulier, on prouve la conjecture de Breuil-Herzig (Corollaire \ref{coro:conjBH}).

\begin{coro}
\cite[Conjecture 3.5.1]{BH} est vraie.
\end{coro}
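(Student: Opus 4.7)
The plan is to deduce this corollary as a direct consequence of Theorem~\ref{theo:2} applied to the data $\chi = \chi_\rho$ and $\Psi = \Psi_\rho$ associated to $\rho$ in the Breuil-Herzig construction. By construction, the indecomposable direct factors of $\Pi(\rho)^\ord$ are exactly the representations $\Pi(\chi_\rho)_{\Psi_\rho,w_{\Psi_\rho}}$ for $w_{\Psi_\rho} \in W_{\Psi_\rho}$, so it suffices to show that each such factor is determined, up to isomorphism, by the graded pieces of its socle filtration.

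Fix $w_\Psi \in W_\Psi$ and let $V$ be an indecomposable admissible continuous unitary representation with the same graded pieces of its socle filtration as $\Pi(\chi)_{\Psi,w_\Psi}$. The socle of $V$ is then $\IndQp w_\Psi(\chi) \epsth$ (corresponding to $I = \emptyset$ in the description recalled before Theorem~\ref{theo:2}), and all other irreducible constituents of $V$ are the principal series $\IndQp (\prod_{\alpha \in I} s_\alpha) w_\Psi(\chi) \epsth$ for non-empty $I \subset \Delta \cap w_\Psi(\Psi)$ consisting of pairwise orthogonal roots.

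To invoke Theorem~\ref{theo:2}, I would need to check that these non-socle constituents are distinct from $\IndQp w'_\Psi(\chi) \epsth$ for every $w'_\Psi \in W_\Psi$. By the genericity of $\chi$, two topologically irreducible principal series coincide if and only if their inducing characters coincide, so this reduces to the combinatorial assertion that $(\prod_{\alpha \in I} s_\alpha) w_\Psi \neq w'_\Psi$ for every such non-empty $I$ and every $w'_\Psi \in W_\Psi$. This fact is implicit in the direct-sum decomposition $\Pi(\chi)_\Psi = \bigoplus_{w_\Psi \in W_\Psi} \Pi(\chi)_{\Psi,w_\Psi}$ stated just before Theorem~\ref{theo:2}, since the socle of one summand cannot appear among the constituents of another without contradicting the list of constituents given there.

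Theorem~\ref{theo:2} then produces a closed embedding $V \hookrightarrow \Pi(\chi)_{\Psi,w_\Psi}$. Since $\Pi(\chi)_{\Psi,w_\Psi}$ is of finite length and multiplicity-free and $V$ shares exactly the same irreducible constituents, this embedding is forced to be an isomorphism, which yields the desired uniqueness. The deduction is essentially formal once Theorem~\ref{theo:2} is available; the substantive content lies in that maximality characterization, which itself rests on the non-existence of the three-step chains of Theorem~\ref{theo:1}.
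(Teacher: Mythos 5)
Your proof is correct and follows the same route as the paper's: both deduce the corollary from Théorème~\ref{theo:ord}, which provides the maximality characterization, and conclude that the socle-filtration data force an isomorphism with $\Pi(\chi)_{\Psi,w_\Psi}$. The paper's own proof is simply terser, observing that Théorème~\ref{theo:ord} already yields the stronger statement that $\Pi(\chi)_{\Psi,w_\Psi}$ is the unique finite-length multiplicity-free representation with socle $C_{w_\Psi,\emptyset}$ and irreducible constituents exactly the $C_{w_\Psi,I}$.
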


Enfin, on déduit du théorème \ref{theo:2} une caractérisation naturelle de la construction de Breuil-Herzig (Corollaire \ref{coro:ord}).

\begin{coro}
Soient $\chi : T(\Qp) \to \Oe^\times \subset E^\times$ un caractère continu unitaire fortement générique (Définition \ref{defi:gen}) et $\Psi \subset \Phi^+$ un sous-ensemble fermé.
On suppose que pour tout $w_\Psi \in W_\Psi$ et pour tout $I \subset \Delta \cap w_\Psi(\Psi)$ constitué de racines deux à deux orthogonales, la représentation \eqref{const} est topologiquement irréductible.
Alors $\Pi(\chi)_\Psi$ est la plus grande représentation continue unitaire admissible de $G(\Qp)$ sur $E$ satisfaisant
\begin{enumerate}
\item $\soc \Pi(\chi)_\Psi \cong \bigoplus_{w_\Psi \in W_\Psi} \IndQp w_\Psi(\chi) \epsth$ ;
\item les sous-quotients irréductibles de $\Pi(\chi)_\Psi$ sont des séries principales ;
\item les facteurs irréductibles de $\soc \Pi(\chi)_\Psi$ apparaissent avec multiplicité un dans $\Pi(\chi)_\Psi$ (donc seulement dans le socle).
\end{enumerate}
\end{coro}

Nous terminons cet article en énonçant une nouvelle conjecture (suggérée par Breuil lorsque $G=\GL_n$) sur les extensions entre les représentations lisses irréductibles de $G(F)$ sur $\ke$ obtenues par induction parabolique à partir de représentations supersingulières de sous-groupes de Levi (Conjecture \ref{conj:ext}).
Nous démontrons cette conjecture pour les extensions par une série principale suffisamment générique lorsque $F=\Qp$ (Proposition \ref{prop:conjQp}) et quelconque lorsque $F \neq \Qp$ (Proposition \ref{prop:conjF}).

\subsection*{Méthodes utilisées}

La preuve du théorème \ref{theo:1} est un calcul d'extensions.
On procède par réduction modulo $\pe^k$ et dévissage (avec $\pe \in \Oe$ une uniformisante) et en caractéristique positive, on suit une stratégie d'Emerton (\cite{Em2}).
Lorsque $\chi'=\chi$ ou $\chi'=\chi''$, on utilise les calculs de parties ordinaires dérivées de \cite{JH}.
Lorsque les caractères sont deux à deux distincts, il faut calculer le foncteur $\HOrdQp[P_\alpha]$ sur une série principale avec $P_\alpha \subset G$ le sous-groupe parabolique standard correspondant à une racine simple $\alpha \in \Delta$.

\medskip

Plus généralement, sans hypothèse sur $F$ ou $G$, nous calculons partiellement le $\delta$-fonteur $\HOrdF$ relatif à un sous-groupe parabolique standard $P \subset G$ sur une série principale lisses à coefficients dans une $\Oe$-algèbre locale artinienne $A$ de corps résiduel $\ke$.
On note $L \subset P$ le sous-groupe de Levi standard, $B_L \subset L$ (resp. $B_L^- \subset L$) le sous-groupe de Borel $B \cap L$ (resp. $B^- \cap L$), $W_L \subset W$ le groupe de Weyl de $(L,T)$ et $\omega : F^\times \to A^\times$ l'image de $\varepsilon$ dans $A^\times$.
On conjecture le résultat suivant (Conjecture \ref{conj:HnOrd}).

\begin{conj}
Soit $U$ une représentation lisse localement admissible de $T(F)$ sur $A$.
Pour tout $n \in \N$, on a un isomorphisme naturel $L(F)$-équivariant
\begin{equation*}
\HOrdF[n] \left( \IndF U \right) \cong \bigoplus_{[F:\Qp] \cdot \ell(\wt_P)=n} \Ind_{B_L^-(F)}^{L(F)} \left( U^{\wt_P} \otimes (\oma_{\wt_P}) \right)
\end{equation*}
avec $\wt_P$ parmi les représentants de longueur minimale des classes à gauche $W/W_L$, $\ell(\wt_P)$ la longueur de $\wt_P$, $U^{\wt_P}$ la représentation $U$ conjuguée par $\wt_P$ et $\alpha_{\wt_P}$ la somme des racines positives $\alpha \in \Phi^+$ telles que $\wt_P(\alpha) \not \in \Phi^+$.
\end{conj}

Nous démontrons cette conjecture d'une part lorsque les termes de la somme directe sont irréductibles (Proposition \ref{prop:HnOrdirr}) et d'autre part lorsque $[F:\Qp] \nmid n$ (Corollaire \ref{coro:HnOrdF}) auquel cas la somme directe est nulle.
Dans le cas général, nous construisons un morphisme naturel $L(F)$-équivariant entre les représentations de cette conjecture et nous montrons que ces dernières sont naturellement munies de filtrations dont les gradués sont naturellement isomorphes en tant que représentations de sous-groupes fermés de $B_L(F)$ (Théorème \ref{theo:HnOrd}).
Lorsque $F=\Qp$, $P=P_\alpha$ et $n=1$, ce dernier résultat nous suffit pour démontrer le théorème \ref{theo:1}.

\medskip

Expliquons l'organisation de ces calculs de parties ordinaires dérivées (Section \ref{sec:POD}).
On note $N_P$ le radical unipotent de $P$.
Les foncteurs $\HOrdF$ sont construits à partir des $A$-modules de cohomologie d'un sous-groupe ouvert compact $\NPz \subset N_P(F)$ et de l'action de Hecke d'un sous-monoïde ouvert $L^+ \subset L(F)$ sur ces derniers.

La sous-section \ref{ssec:hecke} est indépendante et généralise \cite[§~3.1 et §~3.2]{JH}.
En particulier, les groupes algébriques considérés ne sont pas nécessairement des sous-groupes fermés de $G$ et les dévissages considérés ne sont pas nécessairement des produits semi-directs.
Dans la sous-section \ref{ssec:fil}, on définit à partir de la décomposition de Bruhat des filtrations de représentations induites et on calcule leurs gradués.
Dans la sous-section \ref{ssec:calc}, on calcule partiellement la cohomologie de $\NPz$ à valeur dans ces gradués ainsi que l'action de Hecke de $L^+$.
Pour cela, on utilise des dévissages successifs de $\NPz$ et un nouveau résultat clé (Lemme \ref{lemm:H0}).
Dans la sous-section \ref{ssec:HnOrd}, on montre que les filtrations de Bruhat induisent des filtrations des parties ordinaires dérivées et on déduit des calculs précédents les résultats susmentionnés.

\subsection*{Notations et conventions}

Soit $F$ une extension finie de $\Qp$.
On note $\lvert~\rvert_p : \Qp^\times \to p^\Z$ la valeur absolue $p$-adique, $\varepsilon : F^\times \to \Zp^\times$ le caractère cyclotomique $p$-adique (défini par $\varepsilon(x) = \Nrm_{F/\Qp}(x) \lvert \Nrm_{F/\Qp}(x) \rvert_p$ pour tout $x \in F^\times$) et $\omega : F^\times \to \Fp^\times$ sa réduction modulo $p$.

Soit $(G,B,T)$ un triplet avec $G$ un groupe algébrique connexe réductif déployé sur $F$, $B \subset G$ un sous-groupe de Borel et $T \subset B$ un tore maximal déployé.
On note $B^- \subset G$ le sous-groupe de Borel opposé à $B$ par rapport à $T$ et $N$ le radical unipotent de $B$.

On note $W$ le groupe de Weyl de $(G,T)$ et $\ell : W \to \N$ la longueur relative à $B$.
On note $\Phi^+$ les racines positives de $(G,B,T)$ et $\Delta$ les racines simples de $\Phi^+$.
Pour tout $\alpha \in \Phi^+$, on note $\alpha^\vee$ la coracine correspondante et $s_\alpha \in W$ la réflexion correspondante.
Pour tout $w \in W$, on fixe un représentant $\dw \in G(F)$ dans le normalisateur de $T(F)$ et on appelle décomposition réduite toute écriture $w = s_{\alpha_1} \dots s_{\alpha_{\ell(w)}}$ avec $\alpha_1,\dots,\alpha_{\ell(w)} \in \Delta$.
On note $w_0 \in W$ l'élément de longueur maximale.

\medskip

Soit $E$ une extension finie de $\Qp$.
On note $\Oe$ l'anneau des entiers de $E$ et $\ke$ le corps résiduel de $\Oe$.
On fixe une uniformisante de $\pe \in \Oe$.
On désigne par $A$ une $\Oe$-algèbre locale artinienne de corps résiduel $\ke$ et on note encore $\omega : F^\times \to A^\times$ l'image de $\varepsilon$ dans $A^\times$.

Si $X$ est un espace topologique et $V$ est un $A$-module, on note $\Clis(X,V)$ le $A$-module constitué des fonctions $f : X \to V$ localement constantes et $\Clisc(X,V)$ le sous-$A$-module constitué des fonctions à support compact.

On emploiera la terminologie de \cite[§~2]{Em1} pour les représentations lisses à coefficients dans $A$ et on renvoie à \cite[§~3.1]{BH} pour les représentations continues unitaires admissibles sur des $E$-espaces de Banach.

\subsection*{Remerciements}

Ce travail a été réalisé sous la direction de Christophe Breuil.
Je lui suis profondément reconnaissant de m'avoir fait part de ses idées, ainsi que pour ses explications et ses remarques.
Je remercie Florian Herzig pour de nombreux commentaires qui ont permis d'améliorer cet article.

\numberwithin{theo}{subsection}

\section{Calculs de parties ordinaires dérivées} \label{sec:POD}

\subsection{Cohomologie et action de Hecke} \label{ssec:hecke}

Soient $\Nt$ un groupe algébrique unipotent sur $\Qp$ et $\Ntz \subset \Nt(\Qp)$ un sous-groupe ouvert compact standard\footnote{L'exponentielle induit un homéomorphisme $\exp : \Lie(\Nt) \iso \Nt(\Qp)$ et on dit que $\Ntz$ est \emph{standard} si $\exp^{-1}(\Ntz) \subset \Lie(\Nt)$ est une sous-$\Zp$-algèbre de Lie (voir \cite[Definition 3.5.1 et Lemma 3.5.2]{Em2}).}.
On fixe un groupe algébrique $\Lt$ sur $\Qp$ et un sous-monoïde ouvert $\Lt^+ \subset \Lt(\Qp)$.
On note $\Lt_0 \subset \Lt^+$ le sous-groupe ouvert $\Lt^+ \cap (\Lt^+)^{-1}$ de $\Lt(\Qp)$.
On suppose que $\Nt$ est muni d'une action de $\Lt$ que l'on identifie à la conjugaison dans $\Lt \ltimes \Nt$ et que $\Ntz$ est stable sous l'action par conjugaison de $\Lt^+$.
En particulier, $\Lt_0$ normalise $\Ntz$.
On note $\Lt^+ \ltimes \Ntz$ le sous-monoïde de $(\Lt \ltimes \Nt)(\Qp)$ engendré par $\Lt^+$ et $\Ntz$.
Il est ouvert car il contient le sous-groupe ouvert $\Lt_0 \ltimes \Ntz$.

Soit $V$ une représentation lisse de $\Lt^+ \ltimes \Ntz$ sur $A$.
Les $A$-modules de cohomologie $\Hc(\Ntz,V)$ calculés en utilisant des cochaînes localement constantes sont naturellement munis de l'\emph{action de Hecke} de $L^+$, définie pour tout $l \in L^+$ comme la composée
\begin{equation} \label{hecke}
\Hc(\Ntz,V) \to \Hc(l\Ntz l^{-1},V) \to \Hc(\Ntz,V)
\end{equation}
où le premier morphisme est induit par l'action de $l$ sur $V$ et le second est la corestriction de $l\Ntz l^{-1}$ à $\Ntz$.
Les foncteurs $\Hc(\Ntz,-)$ forment un $\delta$-foncteur universel de la catégorie des représentations lisses de $\Lt^+ \ltimes \Ntz$ sur $A$ dans la catégorie des représentations lisses de $\Lt^+$ sur $A$ (voir la preuve de \cite[Lemme 3.1.5]{JH}).

Soient $d$ l'entier $\dim_{\Qp} \Nt$ et $\alphat$ le caractère de la représentation adjointe de $\Lt$ sur $\detfr_{\Qp} \Lie(\Nt)$.
On a un isomorphisme $\Lt^+$-équivariant
\begin{equation} \label{dimN}
\Hc[d] (\Ntz,V) \cong V_{\Ntz} \otimes \alphat^{-1} \lvert\alphat\rvert_p^{-1}
\end{equation}
avec $V_{\Ntz}$ le $A$-module quotient des coinvariants par $\Ntz$ muni de l'action induite de $\Lt^+$ (voir la preuve de \cite[Proposition 3.1.8]{JH}).

\medskip

Soient $\Nt' \subset \Nt$ un sous-groupe fermé distingué stable sous l'action par conjugaison de $\Lt$ et $\Nt''$ le groupe quotient $\Nt/\Nt'$.
On identifie l'action induite de $\Lt$ sur $\Nt''$ à la conjugaison dans $\Lt \ltimes \Nt''$.
On note $\Ntz'$ l'intersection de $\Nt'(\Qp)$ avec $\Ntz$ et $\Ntz''$ l'image de $\Ntz$ dans $\Nt''(\Qp)$.
Ce sont des sous-groupes ouverts compacts standards de $\Nt'(\Qp)$ et $\Nt''(\Qp)$ respectivement et on a une suite exacte courte de groupes topologiques
\begin{equation*}
1 \to \Ntz' \to \Ntz \to \Ntz'' \to 1.
\end{equation*}
Les foncteurs $\Hc(\Ntz',-)$ forment un $\delta$-foncteur universel de la catégorie des représentations lisses de $\Lt^+ \ltimes \Ntz$ sur $A$ dans la catégorie des représentations lisses de $\Lt^+ \ltimes \Ntz''$ sur $A$ (voir la preuve de \cite[Lemme 3.2.1]{JH}).

\begin{lemm}
Soit $l \in \Lt^+$.
Soient $(n'_i)_{i \in \llbrack 1,k' \rrbrack},(n''_j)_{j \in \llbrack 1,k''s \rrbrack}$ des systèmes de représentants des classes à gauche $\Ntz'/l\Ntz'l^{-1},\Ntz''/l\Ntz''l^{-1}$ respectivement.
Pour tout $j \in \llbrack 1,k'' \rrbrack$, on fixe un relèvement $\nt''_j \in \Ntz$ de $n''_j \in \Ntz''$.
Alors $(\nt''_j n'_i)_{(i,j) \in \llbrack 1,k' \rrbrack \times \llbrack 1,k'' \rrbrack}$ est un système de représentants des classes à gauche $\Ntz/l\Ntz l^{-1}$.
\end{lemm}

\begin{proof}
L'application $\Ntz'/l\Ntz'l^{-1} \times \Ntz''/l\Ntz''l^{-1} \to \Ntz/l\Ntz l^{-1}$ définie par $(n'_i,n''_j) \mapsto \nt''_j n'_i$ pour tout $(i,j) \in \llbrack 1,k' \rrbrack \times \llbrack 1,k'' \rrbrack$ est bijective.

Montrons l'injectivité.
Soient $i_1,i_2 \in \llbrack 1,k' \rrbrack$ et $j_1,j_2 \in \llbrack 1,k'' \rrbrack$ tels que $\nt''_{j_1} n'_{i_1} \in \nt''_{j_2} n'_{i_2} l\Ntz l^{-1}$.
En regardant l'image dans $\Ntz''/l\Ntz''l^{-1}$, on trouve $n''_{j_1} \in n''_{j_2} l\Ntz'' l^{-1}$, d'où $j_1=j_2$.
On en déduit que $n'_{i_1} \in n'_{i_2} l\Ntz l^{-1}$, puis $n'_{i_1} \in n'_{i_2} l\Ntz' l^{-1}$ (car $\Ntz' \cap l\Ntz l^{-1} = l\Ntz'l^{-1}$), d'où $i_1=i_2$.

Montrons la surjectivité.
Soit $n \in \Ntz$.
Il existe $j \in \llbrack 1,k'' \rrbrack$ tel que l'image de $n$ dans $\Ntz''$ est dans $n''_j l\Ntz''l^{-1}$, d'où $\nt^{\prime \prime-1}_j n \in n' l\Ntz l^{-1}$ avec $n' \in \Ntz'$.
Puis, il existe $i \in \llbrack 1,k' \rrbrack$ tel que $n' \in n'_i l\Ntz'l^{-1}$, d'où $n \in \nt''_j n'_i l\Ntz l^{-1}$.
\end{proof}

En utilisant ce lemme, on voit que l'action de Hecke de $l \in \Lt^+$ sur $V^{\Ntz}$ définie par \eqref{hecke} coïncide avec celle sur $(V^{\Ntz'})^{\Ntz''}$ définie par \eqref{hecke} avec $\Ntz''$ et $V^{\Ntz'}$ au lieu de $\Ntz$ et $V$ (voir la preuve de \cite[Lemme 3.2.2]{JH}).
On en déduit l'existence d'une suite spectrale dans la catégorie des représentations lisses de $\Lt^+$ sur $A$
\begin{equation} \label{HS}
\Hc[i](\Ntz'',\Hc[j](\Ntz',V)) \Rightarrow \Hc[i+j](\Ntz,V)
\end{equation}
(voir la preuve de \cite[Proposition 3.2.3]{JH}).
Si $\dim_{\Qp} \Nt''=1$, alors en utilisant \cite[Lemma 3.5.4]{Em2} on déduit de la suite spectrale \eqref{HS} des suites exactes courtes de représentations lisses de $\Lt^+$ sur $A$
\begin{equation} \label{HSSE}
0 \to \Hc[1](\Ntz'',\Hc[n-1](\Ntz',V)) \to \Hc[n](\Ntz,V) \to \Hc[n](\Ntz',V)^{\Ntz''} \to 0
\end{equation}
pour tout entier $n>0$.
Le second morphisme non trivial est la restriction et lorsque $n=1$ le premier morphisme non trivial est l'inflation.

\medskip

On termine cette sous-section par un résultat clé.

\begin{lemm} \label{lemm:H0}
Soient $l \in \Lt^+$ un élément contractant strictement\footnote{C'est-à-dire que $\bigcap_{k \in \N} l^k \Ntz l^{-k} = 1$, ou de façon équivalente $\bigcup_{k \in \N} l^{-k} \Ntz l^k = \Nt(\Qp)$.} $\Ntz$ et $V_0$ une représentation lisse localement $l$-finie\footnote{C'est-à-dire que pour tout $v \in V_0$, le sous-$A$-module $A[l] \cdot v \subset V_0$ est de type fini.} de $\Lt(\Qp)$ sur $A$.
On suppose que l'on a une injection $\Lt^+$-équivariante $V_0 \hookrightarrow V$ et que l'action de $l$ sur son conoyau est localement nilpotente.
\begin{enumerate}
\item L'action de Hecke de $l$ sur $V^{\Ntz}$ est localement nilpotente.
\item On a une injection $\Lt^+$-équivariante $V_0 \hookrightarrow V_{\Ntz}$ et l'action de $l$ sur son conoyau est localement nilpotente.
\end{enumerate}
\end{lemm}

\begin{proof}
Soit $v \in V$.
L'action de $l$ sur $V/V_0$ étant localement nilpotente et $V_0$ étant localement $l$-finie, le sous-$A$-module $A[l] \cdot v \subset V$ est de type fini.
Par lissité de l'action de $\Ntz$ sur $V$, on en déduit que le fixateur de $A[l] \cdot v$ dans $\Ntz$ est ouvert.
Comme $l$ contracte strictement $\Ntz$, on en conclut qu'il existe $\kappa \in \N$ tel que $l^\kappa\Ntz l^{-\kappa}$ est dans le fixateur de $A[l] \cdot v$.
En particulier, on a $l^{\kappa+k} \cdot v \in V^{l^\kappa\Ntz l^{-\kappa}}$ pour tout $k \in \N$.

Montrons le point (i).
On suppose $v \in V^{\Ntz}$ et on note $\h$ l'action de Hecke.
Pour tout $k \in \N$, on a
\begin{align*}
l^{\kappa+k} \h v &= \sum_{n \in \Ntz/l^{\kappa+k}\Ntz l^{-(\kappa+k)}} n \cdot (l^{\kappa+k} \cdot v) \\
&= \left(l^\kappa\Ntz l^{-\kappa}:l^{\kappa+k}\Ntz l^{-(\kappa+k)}\right) \sum_{n \in \Ntz/l^\kappa\Ntz l^{-\kappa}} n \cdot (l^{\kappa+k} \cdot v) \\
&= \left(\Ntz:l^k\Ntz l^{-k}\right) \sum_{n \in \Ntz/l^\kappa\Ntz l^{-\kappa}} n \cdot (l^{\kappa+k} \cdot v).
\end{align*}
Or $\Ntz$ est un groupe pro-$p$ infini, $l$ contracte strictement $\Ntz$ et $A$ est artinien, donc l'indice $(\Ntz:l^k\Ntz l^{-k})$ est nul dans $A$ pour $k \in \N$ suffisamment grand.

Montrons le point (ii).
L'action de $\Lt^+$ sur $V_{\Ntz}$ étant induite par celle sur $V$, la composée $V_0 \hookrightarrow V \twoheadrightarrow V_{\Ntz}$ est $\Lt^+$-équivariante.
Comme son conoyau est un quotient de $V/V_0$, l'action de $l$ sur celui-ci est localement nilpotente.
Il suffit donc de montrer que cette composée est injective.
Pour tout $n \in \Ntz$, on a
\begin{equation*}
l^\kappa \cdot (n \cdot v - v) = (l^\kappa n l^{-\kappa}) \cdot (l^\kappa \cdot v) - (l^\kappa \cdot v) = 0.
\end{equation*}
On en déduit que l'action de $l$ sur le noyau de la surjection $V \twoheadrightarrow V_{\Ntz}$ est localement nilpotente.
Comme l'action de $l$ sur $V_0$ est inversible, on en conclut que $V_0 \cap \ker(V \twoheadrightarrow V_{\Ntz})=0$.
\end{proof}

\subsection{Filtrations de Bruhat} \label{ssec:fil}

Soient $P \subset G$ un sous-groupe parabolique standard (c'est-à-dire contenant $B$) et $L \subset P$ le sous-groupe de Levi standard (c'est-à-dire contenant $T$).
On note $N_P$ le radical unipotent de $P$.
On note $B_L \subset L$ (resp. $B_L^- \subset L$) le sous-groupe de Borel $B \cap L$ (resp. $B^- \cap L$), $N_L = N \cap L$ le radical unipotent de $B_L$, $W_L \subset W$ le groupe de Weyl de $(L,T)$, $\Phi_L^+ = \Phi^+ \cap \Phi_L$ les racines positives de $(L,B_L,T)$ et $\Delta_L = \Delta \cap \Phi_L^+$ les racines simples de $\Phi_L^+$.
On définit les représentants de Kostant des classes à gauche $W/W_L$ en posant
\begin{equation*}
\Wt_P \dfn \{ w \in W \mid \text{$w$ de longueur minimale dans $wW_L$} \}.
\end{equation*}
Pour tout $w \in W$, il existe une décomposition unique $w = \wt_P w_L$ avec $\wt_P \in \Wt_P, w_L \in W_L$ et on a $\ell(w)=\ell(\wt_P)+\ell(w_L)$ (voir \cite[Proposition 3.9]{BTC}).
La projection $W \twoheadrightarrow \Wt_P$ définie par $w \mapsto \wt_P$ respecte l'ordre de Bruhat\footnote{L'ordre de Bruhat sur $W$ est défini par $w' \leq w$ si et seulement si il existe une décomposition réduite $w=s_1 \dots s_{\ell(w)}$ et des entiers $1 \leq k_1 < \dots < k_{\ell(w')} \leq n$ tels que $w' = s_{k_1} \dots s_{k_{\ell(w')}}$.} (voir \cite[Proposition 2.5.1]{BB}).

\medskip

Soit $U$ une représentation lisse de $T(F)$ sur $A$.
On définit des filtrations de représentations induites à partir de $U$ et on calcule leurs gradués.

À partir de la décomposition de Bruhat, on obtient la décomposition $G(F) = \bigsqcup_{w \in W} (B^- \dw B)(F)$ où les relations d'adhérence entre les cellules sont données par l'ordre de Bruhat (voir \cite[§~2.1]{JH}).
En utilisant la décomposition $P(F) = B(F) \dot{W}_L B(F)$ et \cite[Lemme 3.4 (iv)]{BTC}, on en déduit les décompositions $G(F) = \bigsqcup_{\wt_P \in \Wt_P} (B^- \dwt_P P)(F)$ et $(B^- \dwt_P P)(F) = \bigsqcup_{w_L \in W_L} (B^- \dwt_P \dw_L B)(F)$ pour tout $\wt_P \in \Wt_P$, où les relations d'adhérence entre les cellules sont données par l'ordre de Bruhat.
En procédant comme dans \cite[§~2.1]{JH} (avec la notation $\cindF^C$ au lieu de $\C_C$ pour tout sous-ensemble localement fermé $B^-(F)$-invariant par translation à gauche $C \subset G(F)$), on construit une filtration naturelle de $\IndF U$ par des sous-$P(F)$-représentations $\Fil_P^\bullet \IndF U$ et pour tout $i \in \N$, on a un isomorphisme naturel $P(F)$-équivariant
\begin{equation} \label{gradP}
\Gr_P^i \IndF U \cong \bigoplus_{\ell(\wt_P)=i} \cindF^{(B^- \dwt_P P)(F)}U.
\end{equation}
De même pour tout $\wt_P \in \Wt_P$, on construit une filtration naturelle de $\cindF^{(B^- \dwt_P P)(F)} U$ par des sous-$B(F)$-représentations $\Fil_B^\bullet \cindF^{(B^- \dwt_P P)(F)} U$ et pour tout $j \in \N$, on a un isomorphisme naturel $B(F)$-équivariant
\begin{equation} \label{gradB}
\Gr_B^j \cindF^{(B^- \dwt_P P)(F)} U \cong \bigoplus_{\ell(w_L)=j} \cindF^{(B^- \dwt_P \dw_L B)(F)} U.
\end{equation}

Soit $w \in W$.
On écrit $w = \wt_P w_L$ avec $\wt_P \in \Wt_P$ et $w_L \in W_L$.
On note $U^{\wt_P}$ la représentation lisse de $T(F)$ sur $A$ dont le $A$-module sous-jacent est $U$ et sur lequel $t \in T(F)$ agit à travers $\dwt_P t \dwt{}_P^{-1}$.

On définit des sous-groupes fermés de $N$ stables sous l'action par conjugaison de $T$ en posant
\begin{gather*}
N_w \dfn N \cap (\dw^{-1} N \dw), \quad \NPw \dfn N_P \cap N_w, \\
\text{et} \quad \NLw \dfn N_L \cap (\dw_L^{-1} N_L \dw_L) = N_L \cap N_w, \label{NLw}
\end{gather*}
la dernière égalité résultant de l'égalité $\Phi_L^+ \cap w_L^{-1}(\Phi_L^+) = \Phi_L^+ \cap w^{-1}(\Phi^+)$ qui caractérise la décomposition $w=\wt_Pw_L$ (voir \cite[Proposition 3.9 (iii)]{BTC}).
Comme $N = N_L \ltimes N_P$, on a un produit semi-direct
\begin{equation} \label{psd}
N_w = \NLw \ltimes \NPw
\end{equation}
d'où un isomorphisme $A$-linéaire
\begin{equation*}
\Clisc(N_w(F),U) \cong \Clisc (\NPw(F),\Clisc(\NLw(F),U))
\end{equation*}
défini par $f \mapsto (n_P \mapsto (n_L \mapsto f(n_Ln_P)))$.
En utilisant l'isomorphisme \cite[(2)]{JH} et son analogue pour le triplet $(L,B_L,T)$ avec $U^{\wt_P}$ et $w_L$ au lieu de $U$ et $w$, on obtient un isomorphisme $A$-linéaire
\begin{equation} \label{isograd}
\cindF^{(B^- \dw B)(F)} U \cong \Clisc \left(\NPw(F),\cindFL^{(B_L^- \dw_L B_L)(F)}U^{\wt_P}\right)
\end{equation}
à travers lequel $\NPw(F)$ agit par translation à droite et l'action de $b \in (T\NLw)(F)$ sur $f \in \Clisc (\NPw(F),\cindFL^{(B_L^- \dw_L B_L)(F)}U^{\wt_P})$ est donnée par
\begin{equation*}
(b \cdot f)(n) = b \cdot f(b^{-1}nb)
\end{equation*}
pour tout $n \in \NPw(F)$.

\subsection{Calculs sur le gradué} \label{ssec:calc}

On fixe un sous-groupe ouvert compact standard $\NPz \subset N_P(F)$ et pour tout sous-groupe fermé $\Lt \subset \Res_{F/\Qp} L$, on pose
\begin{equation*}
\Lt^+ \dfn \{l \in \Lt(\Qp) \mid l\NPz l^{-1} \subset \NPz \}.
\end{equation*}
Soient $U$ une représentation lisse localement admissible de $T(F)$ sur $A$ et $w \in W$.
On écrit $w = \wt_P w_L$ avec $\wt_P \in \Wt_P$ et $w_L \in W_L$.
On calcule la cohomologie de $\NPz$ à valeurs dans
\begin{equation*}
V_w \dfn \cindF^{(B^- \dw B)(F)} U
\end{equation*}
ainsi que l'action de Hecke de $(T\NLw)^+$ sur les $A$-modules $\Hc(\NPz,V_w)$.

On fixe une décomposition réduite $\wt_P = s_{\ell(\wt_P)} \dots s_1$.
Soit $k \in \llbrack 0,\ell(\wt_P) \rrbrack$.
On a $s_k \dots s_1 \in \Wt_P$ et $\NPs{k}$ est stable sous l'action par conjugaison de $T\NLw$ (voir le produit semi-direct \eqref{psd} avec $s_k \dots s_1$ au lieu de $\wt_P$).
On note $\NPsz{k}$ l'intersection de $\NPs{k}(F)$ avec $\NPz$.
Si $k<\ell(\wt_P)$, alors $\NPs{k+1}$ est de codimension $1$ dans $\NPs{k}$ qui est nilpotent, donc il est distingué d'après \cite[Chapitre IV, §~4, Corollaire 1.9]{DG}.
Dans ce cas, on pose
\begin{equation*}
\NPs{k}'' \dfn \NPs{k}/\NPs{k+1}
\end{equation*}
et on note $\NPsz{k}''$ l'image de $\NPsz{k}$ dans $\NPs{k}''(F)$.
La suite spectrale \eqref{HS} avec $\Lt^+=(T\NLw)^+$, $\Ntz=\NPsz{k}$, $\Ntz'=\NPsz{k+1}$, $\Ntz''=\NPsz{k}''$ et $V=V_w$ est une suite spectrale de représentations lisses de $(T\NLw)^+$ sur $A$
\begin{equation} \label{HSk}
\Hc[i](\NPsz{k}'',\Hc[j](\NPsz{k+1},V_w)) \Rightarrow \Hc[i+j](\NPsz{k},V_w)
\end{equation}
et si $i=[F:\Qp]$, alors l'isomorphisme \eqref{dimN} avec $\Lt^+=(T\NLw)^+$, $\Ntz=\NPsz{k}''$ et $V=\Hc[j](\NPsz{k+1},V_w)$ est un isomorphisme $(T\NLw)^+$-équivariant
\begin{multline} \label{dimNk}
\Hc[i](\NPsz{k}'',\Hc[j](\NPsz{k+1},V_w)) \\
\cong \Hc[j](\NPsz{k+1},V_w)_{\NPsz{k}''} \otimes (\oma)
\end{multline}
où $\alpha$ est le caractère algébrique (trivial sur $\NLw$) de la représentation adjointe de $T\NLw$ sur $\Lie(\NPs{k}'')$.

\begin{lemm} \label{lemm:H1}
Soient $k \in \llbrack 0,\ell(\wt_P) \rrbrack$ et $n \in \N$.
Si $n>[F:\Qp] \cdot (\ell(\wt_P)-k)$, alors $\Hc[n](\NPsz{k},V_w)=0$.
\end{lemm}

\begin{proof}
On procède par récurrence décroissante sur $k$ comme dans la preuve de \cite[Lemme 3.3.2]{JH}.
Pour l'initialisation, on montre que $V_w$ est $\NPwz$-acyclique en utilisant l'isomorphisme $\NPwz$-équivariant
\begin{equation*}
V_w \cong \bigoplus_{n \in \NPw(F)/\NPwz} \Clis \left(n \NPwz,\cindFL^{(B_L^- \dw_L B_L)(F)} U^{\wt_P}\right)
\end{equation*}
(qui se déduit de l'isomorphisme \eqref{isograd}) et le fait que la cohomologie de $\NPwz$ commute aux sommes directes (car l'image d'une cochaîne continue est finie par compacité de $\NPwz$).
Pour l'itération, on utilise la suite spectrale \eqref{HSk} avec \cite[Lemma 3.5.4]{Em2}.
\end{proof}

Soit $S$ le plus grand sous-tore déployé de $\Res_{F/\Qp} T$.
On fixe un élément $\zeta \in S^+ \cap Z_L^+$ contractant strictement $\NPz$ (par exemple $\zeta=\lambda(p)$ avec $\lambda$ un cocaractère algébrique de $T$ associé à $P$, c'est-à-dire tel que $\langle \alpha,\lambda \rangle \geq 0$ pour tout $\alpha \in \Phi^+$ avec égalité si et seulement si $\alpha \in \Phi_L^+$).

On note $\alpha_k$ le caractère algébrique (trivial sur $\NLw$) de la représentation adjointe de $T\NLw$ sur $\detfr_F \Lie(\Ns{k} \cap N_{w_0\wt_P})$ et on définit une représentation lisse de $B_L(F)$ sur $A$ en posant
\begin{equation*}
V_k \dfn \cindFL^{(B_L^- \dw_L B_L)(F)} \left(U^{\wt_P} \otimes (\oma_k)\right).
\end{equation*}
D'après \cite[Proposition 4.1.7]{Em1}, $\IndFL (U^{\wt_P} \otimes (\oma_k))$ est localement admissible.
En utilisant \cite[Lemma 2.3.4]{Em1}, on en déduit que le sous-quotient $V_k$ est localement $Z_L(F)$-finie, donc localement $\zeta$-finie.

\begin{lemm} \label{lemm:H2}
Soient $k \in \llbrack 0,\ell(\wt_P) \rrbrack$ et $n \in \N$.
Si $n = [F:\Qp] \cdot (\ell(\wt_P)-k)$, alors on a une injection $(T\NLw)^+$-équivariante $V_k \hookrightarrow \Hc[n](\NPsz{k},V_w)$ et l'action de Hecke de $\zeta$ sur son conoyau est localement nilpotente.
\end{lemm}

\begin{proof}
On procède par récurrence décroissante sur $k$ comme dans la preuve de \cite[Lemme 3.3.3]{JH}.
Pour l'initialisation, on note $\Vwz \subset V_w$ le sous-$A$-module constitué des fonctions à support dans $\NPwz$ à travers l'isomorphisme \eqref{isograd} et on montre que l'évaluation en $1 \in \NPw(F)$ induit un isomorphisme $(T\NLw)^+$-équivariant $\Vwz^{\NPwz} \iso V_0$, d'où une injection $(T\NLw)^+$-équivariante $V_0 \hookrightarrow V_w^{\NPwz}$ et l'action de Hecke de $\zeta$ sur son conoyau est localement nilpotente (car l'action de $\zeta$ sur $V_w/\Vwz$ est localement nilpotente, voir la preuve de \cite[Lemme 3.3.1]{JH}).
Pour l'itération, on utilise la suite spectrale \eqref{HSk} avec \cite[Lemma 3.5.4]{Em2}, le lemme \ref{lemm:H1} et l'isomorphisme \eqref{dimNk}, ainsi que le point (ii) du lemme \ref{lemm:H0} : on obtient une injection $(T\NLw)^+$-équivariante $V_{k+1} \otimes (\oma) \hookrightarrow \Hc[n](\NPsz{k},V_w)$ et l'action de Hecke de $\zeta$ sur son conoyau est localement nilpotente ; puis, on utilise l'isomorphisme naturel $(T\NLw)^+$-équivariant $V_{k+1} \otimes (\oma) \cong V_k$ qui résulte de l'isomorphisme naturel $(T\NLw)^+$-équivariant
\begin{multline*}
\cindFL^{(B_L^- \dw_L B_L)(F)} \left( U^{\wt_P} \otimes (\oma_{k+1}) \right) \otimes (\oma) \\
\cong \cindFL^{(B_L^- \dw_L B_L)(F)} \left( U^{\wt_P} \otimes (\omega^{-1} \circ (\alpha_{k+1} + w_L(\alpha))) \right)
\end{multline*}
et du fait que $w_L(\alpha)$ est le caractère algébrique de la représentation adjointe de $T\NLw$ sur $\Lie(\Ns{k}/\Ns{k+1})$, d'où $\alpha_{k+1} + w_L(\alpha) = \alpha_k$.
\end{proof}

\begin{lemm} \label{lemm:H3}
Soient $k \in \llbrack 0,\ell(\wt_P) \rrbrack$ et $n \in \N$.
Si $n<[F:\Qp] \cdot (\ell(\wt_P)-k)$, alors l'action de Hecke de $\zeta$ sur $\Hc[n](\NPsz{k},V_w)$ est localement nilpotente.
\end{lemm}

\begin{proof}
On procède par récurrence décroissante sur $k$ comme dans la preuve de \cite[Lemme 3.3.4]{JH}, en utilisant en utilisant la suite spectrale \eqref{HSk} avec le lemme \ref{lemm:H1}.
Lorsque $j = [F:\Qp] \cdot (\ell(\wt_P)-(k+1))$ et $i < [F:\Qp]$, on utilise le lemme \ref{lemm:H2} : en notant $V$ la représentation lisse $\Hc[j](\NPsz{k+1},V_w)$ de $S^+ \ltimes \NPsz{k}''$ sur $A$, on a une injection $S^+$-équivariante $V_k \hookrightarrow V$ telle que l'action de $\zeta$ sur son conoyau est localement nilpotente.
Pour conclure, on montre que l'action de Hecke de $\zeta$ sur $\Hc[i](\NPsz{k}'',V)$ est localement nilpotente.

Comme $\Res_{F/\Qp} \NPs{k}''$ est nilpotent et commutatif, l'exponentielle est un isomorphisme de groupes (voir \cite[Chapitre IV, §~2, Proposition 4.1]{DG}).
De plus, l'action adjointe de $S$ sur $\Lie(\Res_{F/\Qp} \NPs{k}'')$ se factorise à travers un caractère algébrique $\alphat$.
On en déduit qu'il existe une suite de sous-groupes fermés stables sous l'action par conjugaison de $S^+$
\begin{equation*}
1 = \Nt_0 \subset \Nt_1 \subset \dots \subset \Nt_{[F:\Qp]} = \Res_{F/\Qp} \NPs{k}''
\end{equation*}
dont les quotients successifs sont isomorphes au groupe additif sur $\Qp$.
Soit $l \in \llbrack 0,[F:\Qp] \rrbrack$.
On note $\Ntlz$ l'intersection de $\Nt_l(\Qp)$ avec $\NPsz{k}''$.
Si $l>0$, alors on note $\Ntlz''$ l'image de $\Ntlz$ dans $(\Nt_l/\Nt_{l-1})(\Qp)$.
On montre par récurrence sur $l$ les points suivants.
\begin{enumerate}
\item Si $i=l$, alors on a une injection $S^+$-équivariante $V_k \otimes \alphat^{-i} \lvert\alphat\rvert_p^{-i} \hookrightarrow \Hc[i](\Ntlz,V)$ et l'action de Hecke de $\zeta$ sur son conoyau est localement nilpotente.
\item Si $i<l$, alors l'action de Hecke de $\zeta$ sur $\Hc[i](\Ntlz,V)$ est localement nilpotente.
\end{enumerate}

Le cas $l=0$ est vrai par hypothèse.
On suppose $l>0$ et le résultat vrai pour $l-1$.
En utilisant la suite exacte \eqref{HSSE} avec $\Lt^+=S^+$, $\Ntz=\Ntlz$, $\Ntz'=\Ntlz[l-1]$ et $\Ntz''=\Ntlz''$ et l'isomorphisme \eqref{dimN} avec $\Lt^+=S^+$, $\Ntz=\Ntlz''$ et $\Hc[i-1](\Ntlz[l-1],V)$ au lieu de $V$, on obtient une suite exacte courte de représentations lisses de $S^+$ sur $A$
\begin{equation} \label{SEL}
0 \to \Hc[i-1](\Ntlz[l-1],V)_{\Ntlz''} \otimes \alphat^{-1}\lvert\alphat\rvert_p^{-1} \to \Hc[i](\Ntlz,V) 
\to \Hc[i](\Ntlz[l-1],V)^{\Ntlz''} \to 0.
\end{equation}

On suppose $i=l$ et on prouve le point (i).
D'un côté $i-1=l-1$ et par l'hypothèse de récurrence, on a une injection $S^+$-équivariante $V_k \otimes \alphat^{-(i-1)} \lvert\alphat\rvert_p^{-(i-1)} \hookrightarrow \Hc[i-1](\Ntlz[l-1],V)$ et l'action de Hecke de $\zeta$ sur son conoyau est localement nilpotente, donc d'après le point (ii) du lemme \ref{lemm:H0} avec $\Lt^+=S^+$, $\Ntz=\Ntlz''$, $\Hc[i-1](\Ntlz[l-1],V)$ et $V_k \otimes \alphat^{-(i-1)} \lvert\alphat\rvert_p^{-(i-1)}$ au lieu de $V$ et $V_k$, on a une injection $S^+$-équivariante $V_k \otimes \alphat^{-(i-1)} \lvert\alphat\rvert_p^{-(i-1)} \hookrightarrow \Hc[i-1](\Ntlz[l-1],V)_{\Ntlz''}$ et l'action de Hecke de $\zeta$ sur son conoyau est localement nilpotente.
De l'autre $i>l-1$, donc $\Hc[i](\Ntlz[l-1],V)=0$ d'après \cite[Lemma 3.5.4]{Em2}, d'où un isomorphisme $S^+$-équivariant $\Hc[i-1](\Ntlz[l-1],V)_{\Ntlz''} \otimes \alphat^{-1} \lvert\alphat\rvert_p^{-1} \iso \Hc[i](\Ntlz,V)$.
En utilisant la suite exacte \eqref{SEL}, on en déduit le point (i).

On suppose $i<l$ et on prouve le point (ii).
D'un côté $i-1<l-1$, donc l'action de Hecke de $\zeta$ sur $\Hc[i-1](\Ntlz[l-1],V)$ est localement nilpotente par hypothèse de récurrence.
De l'autre ou bien $i<l-1$ et l'action de Hecke de $\zeta$ sur $\Hc[i](\Ntlz[l-1],V)$ est localement nilpotente par hypothèse de récurrence, donc l'action de Hecke de $\zeta$ sur $\Hc[i](\Ntlz[l-1],V)^{\Ntlz''}$ est localement nilpotente ; ou bien $i=l-1$ et par hypothèse de récurrence on a une injection $S^+$-équivariante $V_k \otimes \alphat^{-i} \lvert\alphat\rvert_p^{-i} \hookrightarrow \Hc[i](\Ntlz[l-1],V)$ et l'action de Hecke de $\zeta$ sur son conoyau est localement nilpotente, donc l'action de Hecke de $\zeta$ sur $\Hc[i](\Ntlz[l-1],V)^{\Ntlz''}$ est localement nilpotente d'après le point (i) du lemme \ref{lemm:H0} avec $\Lt^+=S^+$, $\Ntz=\Ntlz''$, $\Hc[i-1](\Ntlz[l-1],V)$ et $V_k \otimes \alphat^{-i} \lvert\alphat\rvert_p^{-i}$ au lieu de $V$ et $V_k$.
En utilisant la suite exacte \eqref{SEL}, on en déduit le point (ii).
\end{proof}

\subsection{Calculs sur une induite} \label{ssec:HnOrd}

On rappelle que si $V$ est une représentation lisse $V$ de $P(F)$ sur $A$, alors ses \emph{parties ordinaires dérivées} sont définies par
\begin{equation*} 
\HOrdF V \dfn \Hom_{A[Z_L^+]} \left( A[Z_L(F)],\Hc(\NPz,V) \right)_{Z_L(F)-\fin}
\end{equation*}
(voir \cite[Definition 3.3.1]{Em2}).
Pour tout sous-groupe fermé $\Pt \subset P$ tel que $\Pt=\Lt N_P$ avec $\Lt \subset L$ un sous-groupe fermé contenant $Z_L$, si $V$ est une représentation lisse de $\Pt(F)$ sur $A$, alors les $A$-modules $\HOrdF V$ sont naturellement des représentations lisses de $\Lt(F)$ sur $A$ (car le produit induit un isomorphisme de groupes $\Lt^+ \times_{Z_L^+} Z_L(F) \iso \Lt(F)$, voir la preuve de \cite[Proposition 3.3.6]{Eme06}).
En particulier avec $\Pt=B$, on a $\Lt=B_L$.

\medskip

Soient $U$ une représentation lisse localement admissible de $T(F)$ sur $A$ et $n \in \N$.
On montre que les filtrations de Bruhat induisent des filtrations des parties ordinaires dérivées et on calcule partiellement ces dernières.

D'après \cite[Theorem 3.4.7]{Em2}, $\Hc[n](\NPz,\IndF U)$ est réunion de sous-$A$-modules de type fini stables par $Z_L^+$.
En procédant comme dans \cite[§~2.2]{JH}, on voit que la filtration $\Fil_P^\bullet \IndF U$ induit une filtration de $\Hc[n](\NPz,\IndF U)$ par des sous-$L(F)$-représentations.
En particulier, ces dernières sont réunions de sous-$A$-modules de type fini stables par $Z_L^+$.
En utilisant \cite[Lemma 3.2.1 (3)]{Em2}, on en déduit que la filtration $\Fil_P^\bullet \IndF U$ induit une filtration naturelle de $\HOrdF[n](\IndF U)$ par des sous-$L(F)$-représentations $\Fil_P^\bullet \HOrdF[n](\IndF U)$ et que pour tout $i \in \N$, l'isomorphisme \eqref{gradP} induit un isomorphisme naturel $L(F)$-équivariant
\begin{equation} \label{HOrdgradP}
\Gr_P^i \HOrdF[n] \left( \IndF U \right) \cong \bigoplus_{\ell(\wt_P)=i} \HOrdF[n] \left( \cindF^{(B^- \dwt_P P)(F)} U \right).
\end{equation}
De même pour tout $\wt_P \in \Wt_P$, la filtration $\Fil_B^\bullet \cindF^{(B^- \dwt_P P)(F)} U$ induit une filtration naturelle de $\HOrdF[n](\cindF^{(B^- \dwt_P P)(F)} U)$ par des sous-$B_L(F)$-représentations $\Fil_B^\bullet \HOrdF[n](\cindF^{(B^- \dwt_P P)(F)} U)$ et pour tout $j \in \N$, l'isomorphisme \eqref{gradB} induit un isomorphisme naturel $B_L(F)$-équivariant
\begin{multline} \label{HOrdgradB}
\Gr_B^j \HOrdF[n] \left( \cindF^{(B^- \dwt_P P)(F)} U \right) \\
\cong \bigoplus_{\ell(w_L)=j} \HOrdF[n] \left( \cindF^{(B^- \dwt_P \dw_L B)(F)} U \right).
\end{multline}

Par ailleurs, en procédant comme dans la sous-section \ref{ssec:fil} pour le triplet $(L,B_L,T)$ avec $\Ut$ une représentation lisse de $T(F)$ sur $A$, on construit une filtration naturelle de $\IndFL \Ut$ par des sous-$B_L(F)$-représentations $\Fil_{B_L}^\bullet \IndFL \Ut$ et pour tout $j \in \N$, on a un isomorphisme $B_L(F)$-équivariant
\begin{equation} \label{gradBL}
\Gr_{B_L}^j \IndFL \Ut \cong \bigoplus_{\ell(w_L)=j} \cindFL^{(B_L^- \dw_L B_L)(F)} \Ut.
\end{equation}

Pour tout $\wt_P \in \Wt_P$, on rappelle que $U^{\wt_P}$ est la représentation lisse de $T(F)$ sur $A$ dont le $A$-module sous-jacent est $U$ et sur lequel $t \in T(F)$ agit à travers $\dwt_P t \dwt{}_P^{-1}$ et on note $\alpha_{\wt_P}$ le caractère algébrique de la représentation adjointe de $T$ sur $\detfr_F \Lie(N_{w_0\wt_P})$.
On a $\alpha_1=1$ et $\alpha_{s_\alpha}=\alpha$ pour tout $\alpha \in \Delta-\Delta_L$.

\begin{theo} \label{theo:HnOrd}
Soient $U$ une représentation lisse localement admissible de $T(F)$ sur $A$ et $n \in \N$.
On a un isomorphisme naturel $L(F)$-équivariant
\begin{equation*}
\HOrdF[n] \left( \IndF U \right) \cong \bigoplus_{[F:\Qp] \cdot \ell(\wt_P)=n} \HOrdF[n] \left( \cindF^{(B^- \dwt_P P)(F)} U \right).
\end{equation*}
Soit $\wt_P \in \Wt_P$ tel que $[F:\Qp] \cdot \ell(\wt_P) = n$.
Pour tout $j \in \N$, on a un isomorphisme naturel $T(F)$-équivariant
\begin{equation*}
\Gr_B^j \HOrdF[n] \left( \cindF^{(B^- \dwt_P P)(F)} U \right) \cong \Gr_{B_L}^j \IndFL \left( U^{\wt_P} \otimes (\oma_{\wt_P}) \right)
\end{equation*}
dont la restriction au facteur direct $\cindFL^{(B_L^- \dw_L B_L)(F)} (U^{\wt_P} \otimes (\oma_{\wt_P}))$ à travers l'isomorphisme \eqref{gradBL} avec $\Ut=U^{\wt_P} \otimes (\oma_{\wt_P})$ est $\NLw(F)$-équivariante pour tout $w_L \in W_L$ tel que $\ell(w_L)=j$.
\end{theo}

\begin{proof}
Pour tout $\wt_P \in \Wt_P$, on déduit des lemmes \ref{lemm:H1}, \ref{lemm:H2} et \ref{lemm:H3} avec $k=0$ un isomorphisme $(T\NLw)(F)$-équivariant
\begin{equation} \label{iso}
\HOrdF[n] \left( \cindF^{(B^- \dw B)(F)} U \right) \cong \cindFL^{(B_L^- \dw_L B_L)(F)} \left( U^{\wt_P} \otimes (\oma_{\wt_P}) \right)
\end{equation}
si $[F:\Qp] \cdot \ell(\wt_P) = n$ et l'égalité $\HOrdF[n](\cindF^{(B^- \dw B)(F)} U)=0$ sinon ; en utilisant l'isomorphisme \eqref{HOrdgradP}, on obtient le second isomorphisme de l'énoncé si $[F:\Qp] \cdot \ell(\wt_P) = n$ et l'égalité $\HOrdF[n](\cindF^{(B^- \dwt_P P)(F)} U)=0$ sinon.
En utilisant l'isomorphisme \eqref{HOrdgradB}, on déduit de ces égalités que le gradué $\Gr_P^\bullet \HOrdF[n] (\IndF U)$ est nul si $[F:\Qp] \nmid n$ et concentré en degré $n/[F:\Qp]$ sinon.
Ainsi, la filtration $\Fil_P^\bullet \HOrdF[n](\IndF U)$ est triviale et on obtient le premier isomorphisme de l'énoncé.

La naturalité de l'isomorphisme \eqref{iso} est une conséquence de la naturalité des filtrations de Bruhat, de l'inclusion $\Vwz \subset V_w$ qui induit l'injection du lemme \ref{lemm:H2}, de la suite spectrale \eqref{HSk} et de l'isomorphisme \eqref{dimNk}.
\end{proof}

\begin{rema}
On s'attend à ce que l'isomorphisme \eqref{iso} soit en fait $B_L(F)$-équivariant.
Les calculs sont limités par le fait que les dévissages de $N_P$ utilisés dans la sous-section \ref{ssec:calc} ne sont pas stables sous l'action par conjugaison de $B_L$.
\end{rema}

\begin{coro} \label{coro:HnOrdF}
Soient $U$ une représentation lisse localement admissible de $T(F)$ sur $A$ et $n \in \N$.
Si $[F:\Qp] \nmid n$, alors $\HOrdF[n] (\IndF U) = 0$.
\end{coro}

Soit $\wt_P \in \Wt_P$ tel que $[F:\Qp] \cdot \ell(\wt_P) = n$.
On déduit du théorème \ref{theo:HnOrd} une injection naturelle $B_L(F)$-équivariante
\begin{equation*}
\Fil_{B_L}^0 \IndFL \left( U^{\wt_P} \otimes (\oma_{\wt_P}) \right) \hookrightarrow \HOrdF[n] \left( \cindF^{(B^- \dwt_P P)(F)} U \right)
\end{equation*}
qui se prolonge naturellement en un morphisme $L(F)$-équivariant
\begin{equation} \label{HnOrd}
\IndFL \left( U^{\wt_P} \otimes (\oma_{\wt_P}) \right) \to \HOrdF[n] \left( \cindF^{(B^- \dwt_P P)(F)} U \right)
\end{equation}
(voir la preuve de \cite[Theorem 4.4.6]{Em1}).

\begin{conj} \label{conj:HnOrd}
Le morphisme naturel \eqref{HnOrd} est un isomorphisme.
\end{conj}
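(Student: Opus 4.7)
Mon plan est de démontrer la conjecture par un argument de filtration de Bruhat : montrer que le morphisme naturel \eqref{HnOrd} envoie $\Fil_{B_L}^\bullet \IndL(U^{\w_P} \otimes (\oma_{\w_P}))$ dans $\Fil_B^\bullet \HOrd[n](\cind^{(B^- \wtd_P P)(F)} U)$, identifier le morphisme induit sur chaque gradué à l'isomorphisme donné par le corollaire \ref{coro:HnOrd}, puis conclure par récurrence sur $j$ et le lemme des cinq appliqué aux suites exactes courtes
\begin{equation*}
0 \to \Fil_{B_L}^{j-1} \to \Fil_{B_L}^j \to \Gr_{B_L}^j \to 0
\end{equation*}
des deux côtés.

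Pour la compatibilité aux filtrations, le morphisme \eqref{HnOrd} est obtenu par extension, via l'action de $L(F)$, de l'injection naturelle $\Fil_{B_L}^0 \IndL \hookrightarrow \Fil_B^0 \HOrd[n](\cind^{(B^- \wtd_P P)(F)} U)$, laquelle est $B_L(F)$-équivariante par construction. Comme les strates des deux filtrations de Bruhat sont obtenues en translatant la strate $0$ par un même système de représentants du groupe de Weyl (sous-section \ref{ssec:fil}), la compatibilité se déduit en suivant ces translatés. Sur chaque gradué $\Gr_{B_L}^j = \bigoplus_{\ell(w_L)=j} \cindL^{(B_L^- \dw_L B_L)(F)}(U^{\w_P} \otimes (\oma_{\w_P}))$, le morphisme induit est $T(F) \cdot \NLw(F)$-équivariant cellule par cellule, et un argument de rigidité exploitant la description explicite \eqref{isogradBL} et les valeurs initiales sur $\Fil_{B_L}^0$ devrait l'identifier à l'isomorphisme du corollaire \ref{coro:HnOrd}. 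Le lemme des cinq propage alors l'isomorphie de $\Fil_{B_L}^{j-1}$ à $\Fil_{B_L}^j$, et l'on conclut en prenant $j = \dim N_L$.

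Le principal obstacle, précisément celui signalé dans la remarque suivant le théorème \ref{theo:HnOrd}, est que l'isomorphisme du corollaire \ref{coro:HnOrd} n'est établi que comme morphisme $T(F) \cdot \NLw(F)$-équivariant sur chaque cellule, et non comme morphisme $B_L(F)$-équivariant, car les dévissages de $N_P$ employés dans la sous-section \ref{ssec:calc} ne sont pas stables sous l'action par conjugaison de $B_L$. Pour contourner cette difficulté, on pourrait chercher un dévissage de $N_P$ stable sous l'action par conjugaison de $B_L$, par exemple celui issu de la suite centrale descendante de $N = N_L \ltimes N_P$ dont les termes sont stables sous l'action par conjugaison de $B$, et refaire les calculs de la sous-section \ref{ssec:calc} en suivant l'action complète de $B_L(F)$ à travers les suites spectrales de Hochschild-Serre correspondantes. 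Alternativement, on pourrait tirer parti de l'unicité des entrelacements entre induites paraboliques pour promouvoir \emph{a posteriori} l'équivariance $\NLw(F)$ cellule par cellule en une $B_L(F)$-équivariance globale.
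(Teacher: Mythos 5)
Ce que vous essayez de démontrer est précisément ce que l'article \emph{ne démontre pas} : l'énoncé \ref{conj:HnOrd} y figure comme conjecture, et l'auteur n'en établit qu'un cas particulier (Proposition \ref{prop:HnOrdirr}, lorsque la source $\IndL(U^{\w_P}\otimes(\oma_{\w_P}))$ est irréductible), par un argument de longueur finie et non par lemme des cinq. Votre plan ne peut donc pas coïncider avec la preuve du papier ; la question est s'il comble le vide, et ce n'est pas le cas.

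Vous identifiez correctement l'obstacle — la non-$B_L(F)$-équivariance de l'isomorphisme du corollaire \ref{coro:HnOrd}, signalée par la remarque qui suit le théorème \ref{theo:HnOrd} — mais vos deux propositions pour le \og contourner \fg{} restent au stade de la spéculation : refaire les calculs de la sous-section \ref{ssec:calc} avec un dévissage de $N_P$ stable sous $B_L$ (par exemple la suite centrale descendante de $N$) n'est pas esquissé et n'a aucune raison de produire les mêmes gradués explicites ; et l'\og unicité des entrelacements \fg{} n'est pas un principe disponible ici, puisque les objets en jeu (gradués de filtrations de Bruhat de parties ordinaires dérivées) ne sont pas des induites paraboliques irréductibles. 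Même en admettant la compatibilité du morphisme \eqref{HnOrd} aux deux filtrations — qui n'est pas établie, car le morphisme est construit par prolongement $L(F)$-équivariant depuis $\Fil_{B_L}^0$ et rien ne garantit qu'il envoie $\Fil_{B_L}^j$ dans $\Fil_B^j$ — le lemme des cinq exigerait que le morphisme \emph{induit} sur chaque gradué $\Gr_{B_L}^j \to \Gr_B^j$ soit un isomorphisme. Or le corollaire \ref{coro:HnOrd} fournit seulement un isomorphisme abstrait $T(F)$-équivariant entre ces deux objets, pas le fait que le morphisme induit par \eqref{HnOrd} en soit un : les deux applications pourraient différer et rien dans votre argument de \og rigidité \fg{} ne force le morphisme induit à être injectif ou surjectif au-delà de $\Fil_{B_L}^0$. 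Le vide demeure, comme dans l'article.
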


Lorsque $n=0$, la conjecture \ref{conj:HnOrd} est vraie : d'après \cite[Proposition 4.3.4]{Em1}, on a un isomorphisme naturel $L(F)$-équivariant
\begin{equation} \label{Ord}
\OrdF \left( \IndF U \right) \cong \IndFL U.
\end{equation}
On prouve aussi la conjecture lorsque la source du morphisme est irréductible.

\begin{prop} \label{prop:HnOrdirr}
Si la représentation $\IndFL (U^{\wt_P} \otimes (\oma_{\wt_P}))$ est irréductible, alors le morphisme naturel \eqref{HnOrd} est un isomorphisme.
\end{prop}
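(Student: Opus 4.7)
On peut supposer $U \neq 0$, le cas $U = 0$ étant trivial, et on note $\phi$ le morphisme \eqref{HnOrd}. Le plan comporte trois étapes principales.

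Tout d'abord, on observe que $\phi$ est non nul : par construction (comme rappelé dans le texte précédant \eqref{HnOrd}), sa restriction à $\Fil_{B_L}^0 \IndL \left( U^{\w_P} \otimes (\oma_{\w_P}) \right)$ coïncide avec l'injection naturelle dans $\Fil_B^0 \HOrd[n] \left( \cind^{(B^- \wtd_P P)(F)} U \right)$, laquelle est un isomorphisme d'après le corollaire \ref{coro:HnOrd} appliqué à $j = 0$.

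Ensuite, la source étant irréductible comme $L(F)$-représentation, tout morphisme non nul la quittant est injectif. Donc $\phi$ est injectif.

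Enfin, pour la surjectivité, on utilise que la source, étant irréductible, est engendrée comme $L(F)$-représentation par n'importe laquelle de ses sous-$B_L(F)$-représentations non nulles, en particulier par $\Fil_{B_L}^0 \IndL \left( U^{\w_P} \otimes (\oma_{\w_P}) \right)$. Comme $\phi$ est $L(F)$-équivariant et envoie cette sous-représentation isomorphiquement sur $\Fil_B^0 \HOrd[n] \left( \cind^{(B^- \wtd_P P)(F)} U \right)$, l'image de $\phi$ coïncide avec la sous-$L(F)$-représentation du but engendrée par son $\Fil_B^0$. La proposition se ramène donc à montrer que $\HOrd[n] \left( \cind^{(B^- \wtd_P P)(F)} U \right)$ est $L(F)$-engendré par son $\Fil_B^0$.

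C'est dans cette dernière étape que réside l'obstacle principal. On espère l'établir via le corollaire \ref{coro:HnOrd} en comparant les gradués : les $\Gr_B^j$ du but coïncident avec les $\Gr_{B_L}^j$ de la source comme $T(F)$-représentations, donc le conoyau de $\phi$, muni de la filtration induite, devrait être trivial. L'isomorphisme du corollaire n'étant toutefois que $T(F)$-équivariant (avec équivariance partielle sous $\NLw$), il faudra probablement un argument plus fin, par exemple en passant aux invariants sous un sous-groupe ouvert compact de $L(F)$ ou en comparant des caractères centraux, pour conclure.
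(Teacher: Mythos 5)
Your injectivity argument matches the paper's: the morphism \eqref{HnOrd} is nonzero by construction (its restriction to $\Fil_{B_L}^0$ is an isomorphism onto $\Fil_B^0$), hence injective since the source is irreducible. The problem is the surjectivity step, where you correctly identify an obstacle but leave it open.

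Your attempt to generate the target from $\Fil_B^0$ and then compare graded pieces stumbles precisely where you say it does: the isomorphism of Corollary \ref{coro:HnOrd} on the graded pieces is only $T(F)$-equivariant (with partial $\NLw$-equivariance), and moreover one does not know a priori that the map $\phi$ respects the filtrations $\Fil_{B_L}^\bullet$ and $\Fil_B^\bullet$ (they are filtrations by $B_L(F)$-subrepresentations, while $\phi$ is only known to be compatible with the $0$-th steps). So "le conoyau devrait être trivial" does not follow.

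The paper's resolution sidesteps the filtration-compatibility issue altogether by a length count. Since the source is irreducible, $U$ is an irreducible representation of the commutative group $T(F)$ over $\ke$; by a scalar extension argument (\cite[Lemme 4.1.9]{Em2}) one reduces to $U$ absolutely irreducible, hence one-dimensional. Then, using the isomorphism \eqref{isogradBL}, each graded piece $\cindL^{(B_L^- \dw_L B_L)(F)} \left( U^{\w_P} \otimes (\oma_{\w_P}) \right)$ is $(T\NLw)(F)$-equivariantly isomorphic to $\Clisc(\NLw(F),\ke) \otimes_\ke (\text{character})$, and $\Clisc(\NLw(F),\ke)$ is irreducible as a smooth $(T\NLw)(F)$-representation over $\ke$ by a theorem of Vignéras (\cite[Théorème 5]{Vig}). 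Since $(T\NLw)(F) \subset B_L(F)$, each such graded piece is a fortiori $B_L(F)$-irreducible. By Corollary \ref{coro:HnOrd} the target has a filtration by $B_L(F)$-subrepresentations whose graded pieces are $(T\NLw)(F)$-isomorphic to these same irreducibles, so both source and target have finite $B_L(F)$-length equal to $\card W_L$. An injection between finite-length representations of equal length is an isomorphism. The key idea you are missing is therefore the reduction to $U$ one-dimensional combined with the irreducibility of $\Clisc(\NLw(F),\ke)$: it lets one count $B_L(F)$-composition factors on both sides and conclude without needing $\phi$ to interact with the filtrations beyond its restriction to $\Fil_{B_L}^0$.
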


\begin{proof}
On suppose la représentation $\IndFL (U^{\wt_P} \otimes (\oma_{\wt_P}))$ irréductible, donc $U$ irréductible.
Comme le morphisme \eqref{HnOrd} est non nul, on en déduit qu'il est injectif.
Il reste à montrer sa surjectivité.
Par extension des scalaires on se ramène au cas où $U$ est absolument irréductible (voir \cite[Lemma 4.1.9]{Em2}), donc de dimension $1$ sur $\ke$ (car $T(F)$ est commutatif).
Dans ce cas, on montre que la source et le but du morphisme \eqref{HnOrd} sont de longueur finie égale à $\card W_L$ dans la catégorie des représentations lisses de $B_L(F)$ sur $\ke$ : pour tout $w_L \in W_L$, on a un isomorphisme $(T\NLw)(F)$-équivariant
\begin{multline*}
\cindFL^{(B_L^- \dw_L B_L)(F)} \left( U^{\wt_P} \otimes (\oma_{\wt_P}) \right) \\
\cong \Clisc (\NLw(F),\ke) \otimes_{\ke} \left( U^{\wt_P} \otimes (\oma_{\wt_P}) \right)^{w_L}
\end{multline*}
et $\Clisc (\NLw(F),\ke)$ est irréductible dans la catégorie des représentations lisses de $(T\NLw)(F)$ sur $\ke$ d'après \cite[Théorème 5]{Vig}, donc en utilisant le théorème \ref{theo:HnOrd} on en déduit le résultat.
\end{proof}

\section{Application aux extensions}

\subsection{Caractères génériques}

Soit $\chi$ un caractère de $T(F)$ à valeurs dans le groupe des unités d'un anneau quelconque.
Pour tout $w \in W$, on note $w(\chi)$ le caractère de $T(F)$ défini par $w(\chi)(t) = \chi(\dw^{-1} t \dw)$ pour tout $t \in T(F)$.

\begin{defi} \label{defi:gen}
On dit que $\chi$ est :
\begin{itemize}
\item \emph{faiblement générique} si $s_\alpha(\chi) \neq \chi$ pour tout $\alpha \in \Delta$ ;
\item \emph{générique} si $s_\alpha(\chi) \neq \chi$ pour tout $\alpha \in \Phi^+$ ;
\item \emph{fortement générique} si $w(\chi) \neq \chi$ pour tout $w \in W-\{1\}$.
\end{itemize}
\end{defi}

\begin{rema} \phantomsection \label{rema:gen}
\begin{enumerate}
\item Soit $\alpha \in \Phi^+$.
Si $s_\alpha(\chi) \neq \chi$, alors $\chi \circ \alpha^\vee \neq 1$ ; la réciproque est vraie lorsque le centre de $G$ est connexe (voir \cite[Lemme 5.1.2]{JH}), mais pas en général (voir \cite[Lemme 3.1.2]{JHC}).
\item Si $\chi$ est générique, alors $w(\chi)$ est générique pour tout $w \in W$.
\item Si $\chi$ est générique, alors $\chi$ est fortement générique lorsque $G=\GL_n$, mais pas en général (voir l'exemple \ref{exem:gen} ci-dessous).
\end{enumerate}
\end{rema}

\begin{exem} \label{exem:gen}
On suppose $F=\Qp$ et $G=\Gd$.
On note $\Delta=\{\alpha,\beta\}$.
Soit $\chi : T(\Qp) \to \{\pm 1\}$ le caractère défini par
\begin{equation*}
\chi(t) = (-1)^{\val_p(\alpha(t))} (-1)^{\left(\frac{\omega(\beta(t))}p\right)}
\end{equation*}
avec $\val_p : \Qp^\times \to \Z$ la valuation $p$-adique et $(\frac{}p) : \Fp^\times \to \{\pm1\}$ le symbole de Legendre (c'est-à-dire le résidu quadratique modulo $p$).
Si $p \neq 2$, alors $\chi$ est générique (car $\Gd$ est semi-simple, simplement connexe et de centre connexe, donc $\langle \alpha,\gamma^\vee \rangle = 1$ ou $\langle \beta,\gamma^\vee \rangle = 1$ pour tout $\gamma \in \Phi^+$) mais $w_0(\chi)=\chi$ (car $w_0(t)=t^{-1}$ pour tout $t \in T(\Qp)$).
On note que $w_0 = s_\alpha s_\beta s_\alpha s_\beta s_\alpha s_\beta$.
\end{exem}

\begin{lemm} \label{lemm:gen}
On suppose que le centre de $G$ est connexe.
Soient $w \in W$ et $s_1,\dots,s_n \in W$ des réflexions simples telles que $w=s_n \dots s_1$.
\begin{enumerate}
\item Soit $\alpha \in \Delta$.
Si $\chi \circ \alpha^\vee \neq 1$, $s_1=s_\alpha$ et $s_k \neq s_\alpha$ pour tout $k \in \llbrack 2,n \rrbrack$, alors $w(\chi) \neq \chi$.
\item Si $\chi$ est générique et s'il existe $k_0 \in \llbrack 1,n \rrbrack$ tel que $s_k \neq s_{k_0}$ pour tout $k \in \llbrack 1,n \rrbrack -\{k_0\}$, alors $w(\chi) \neq \chi$.
\item Si $\chi$ est générique et $\ell(w) \leq 5$, alors $w(\chi) \neq \chi$.
\end{enumerate}
\end{lemm}

\begin{rema}
Dans le point (iii), la condition $\ell(w) \leq 5$ est optimale : il existe des contre-exemples lorsque $\ell(w)=6$ (voir l'exemple \ref{exem:gen}).
\end{rema}

\begin{proof}
Pour tout cocaractère algébrique $\lambda$ de $T$, on a
\begin{equation*}
(\chi \cdot w(\chi)^{-1}) \circ \lambda = \chi \circ (\lambda-w^{-1}(\lambda)) = \chi \circ \big( \sum_{k=1}^n s_1 \dots s_{k-1} (\lambda - s_k(\lambda)) \big).
\end{equation*}
Soient $\alpha \in \Delta$ et $\lambda_\alpha$ un copoids fondamental correspondant à $\alpha$ (voir \cite[Proposition 2.1.1]{BH}).
Pour tout $k \in \llbrack 1,n \rrbrack$, on a
\begin{equation*}
\lambda_\alpha - s_k(\lambda_\alpha) =
\begin{cases}
\alpha^\vee &\text{si $s_k=s_\alpha$,} \\
0& \text{sinon.}
\end{cases}
\end{equation*}
On suppose $s_\alpha = s_{k_0}$ avec $k_0 \in \llbrack 1,n \rrbrack$ et $s_\alpha \neq s_k$ pour tout $k \in \llbrack 1,n \rrbrack -\{k_0\}$.
Si $k_0=1$ et $\chi \circ \alpha^\vee \neq 1$, ou encore si $\chi$ est générique, alors
\begin{equation*}
(\chi \cdot w(\chi)^{-1}) \circ \lambda_\alpha = \chi \circ s_1 \dots s_{k_0-1} (\alpha)^\vee \neq 1.
\end{equation*}

On suppose $\chi$ générique et $\ell(w) \leq 5$.
Si $\ell(w) \leq 3$, alors la condition du point (ii) est automatiquement vérifiée par une décomposition réduite de $w$, donc $w(\chi) \neq \chi$.
Si $\ell(w) = 5$ et la condition du point (ii) n'est pas vérifiée pour une décomposition réduite de $w$, alors nécessairement cette décomposition réduite est de la forme $w = s_\alpha s_\beta s_\alpha s_\beta s_\alpha$ avec $\alpha, \beta \in \Delta$ et d'après le point (ii), on a $s_\beta s_\alpha s_\beta(s_\alpha(\chi)) \neq s_\alpha(\chi)$ par généricité de $s_\alpha(\chi)$, d'où $w(\chi) \neq \chi$.
On suppose que $\ell(w)=4$ et que la condition du point (ii) n'est pas vérifiée par une décomposition réduite de $w$.
Nécessairement, cette décomposition réduite est de la forme $w= s_\beta s_\alpha s_\beta s_\alpha$ avec $\alpha,\beta \in \Delta$ distinctes non orthogonales.
Dans ce cas, on déduit de \cite[Chapitre IV, §~1.3]{BbkLIE46} que $\langle \alpha,\beta^\vee \rangle = -1$ ou $\langle \beta,\alpha^\vee \rangle = -1$ (le signe résulte du fait que $\alpha$ et $\beta$ sont simples).
Quitte à remplacer $w$ par $w^{-1}$, on peut supposer $\langle \beta,\alpha^\vee \rangle = -1$.
Dans ce cas, on a
\begin{equation*}
(\chi \cdot w(\chi)^{-1}) \circ \lambda_\alpha = \chi \circ \left( \alpha^\vee + s_\alpha s_\beta(\alpha)^\vee \right) = \chi \circ s_\alpha(\beta)^\vee \neq 1. \qedhere
\end{equation*}
\end{proof}

\subsection{Inexistence de chaînes de séries principales}

On suppose $F=\Qp$, le centre de $G$ connexe et le groupe dérivé de $G$ simplement connexe.
On note $\theta$ la somme des poids fondamentaux (bien définie à un caractère algébrique de $G$ près, voir \cite[Proposition 2.1.1]{BH}).

On montre que certaines \og chaînes \fg{} de trois séries principales n'existent pas dans la catégorie des représentations continues unitaires admissibles de $G(\Qp)$ sur $E$.
Le résultat analogue modulo $p$ (c'est-à-dire dans la catégorie des représentations lisses admissibles de $G(\Qp)$ sur $\ke$) se démontre de façon analogue.

\begin{theo} \label{theo:chaine}
Soient $\chi, \chi', \chi'' : T(\Qp) \to \Oe^\times \subset E^\times$ des caractères continus unitaires.
On suppose $\chi \neq \chi''$ et si $\chi'=\chi$ ou $\chi'=\chi''$, alors $\chi'$ faiblement générique.
Alors il n'existe pas de représentation continue unitaire admissible de $G(\Qp)$ sur $E$ ayant $\IndQp \chi \epsth$ pour socle, $\IndQp \chi'' \epsth$ pour cosocle et $\IndQp \chi' \epsth$ pour unique constituant irréductible intermédiaire.
\end{theo}

\begin{rema}
Si $\chi''=\chi$ et $\chi'=s_\alpha(\chi) \neq \chi$ avec $\alpha \in \Delta$, alors il existe une unique chaîne comme dans le théorème à isomorphisme près et elle est obtenue par induction parabolique à partir de $\GL_2(\Qp)$.
\end{rema}

\begin{proof}
On suppose que les séries principales de l'énoncé sont topologiquement irréductibles (sinon le résultat est trivial) et qu'il existe des extensions non scindées de $\IndQp \chi' \epsth$ par $\IndQp \chi \epsth$ et de $\IndQp \chi'' \epsth$ par $\IndQp \chi' \epsth$ dans la catégorie des représentations continues unitaires admissibles de $G(\Qp)$ sur $E$ (sinon il n'existe pas de chaîne comme dans l'énoncé).

\medskip

\emph{1\ier{} cas : $\chi' \neq \chi$ et $\chi' \neq \chi''$.}

Dans ce cas, on déduit de \cite[Théorème 1.1 (i)]{JH} que $\chi' = s_\beta(\chi)$ et $\chi'' = s_\alpha s_\beta(\chi)$ avec $\alpha, \beta \in \Delta$ distinctes telles que $\chi \circ \beta^\vee \neq 1$ et $\chi \circ s_\beta(\alpha)^\vee \neq 1$ (voir le point (i) de la remarque \ref{rema:gen}).

On note $G_\alpha \subset G$ le sous-groupe fermé engendré par $T$ et les sous-groupes radiciels correspondant aux racines $\pm\alpha$, $P_\alpha \subset G$ (resp. $P_\alpha^- \subset G$) le sous-groupe parabolique standard $BG_\alpha$ (resp. $B^-G_\alpha$) et $B_\alpha \subset G_\alpha$ (resp. $B_\alpha^- \subset G_\alpha$) le sous-groupe de Borel $B \cap G_\alpha$ (resp. $B^- \cap G_\alpha$).
D'après \cite[Lemma 3.1.4]{BH}, il existe un sous-tore $T' \subset T$ et un isomorphisme $G_\alpha \cong T' \times \GL_2$ à travers lequel $T \cong T' \times T_\alpha$ avec $T_\alpha$ un tore maximal déployé de $\GL_2$.
On a
\begin{equation*}
s_\alpha s_\beta(\chi)_{|T_\alpha(\Qp)} = s_\alpha(s_\beta(\chi)_{|T_\alpha(\Qp)}) \quad \text{et} \quad s_\alpha s_\beta(\chi)_{|T'(\Qp)} = s_\beta(\chi)_{|T'(\Qp)}.
\end{equation*}

On note $\E_2$ l'unique extension non scindée de $\Indd s_\alpha (s_\beta(\chi)_{|T_\alpha(\Qp)}) \cdot (\varepsilon^{-1} \circ \theta_{|T_\alpha(\Qp)})$ par $\Indd s_\beta(\chi)_{|T_\alpha(\Qp)} \cdot (\varepsilon^{-1} \circ \theta_{|T_\alpha(\Qp)})$ dans la catégorie des représentations continues unitaires admissibles de $\GL_2(\Qp)$ sur $E$ donnée par \cite[Proposition B.2 (i)]{BH}.
En utilisant \cite[Lemma A.6 (ii)]{BH} avec $G_1=\GL_2(\Qp)$, $G_2=T'(\Qp)$, $\Pi_1= \Indd s_\beta(\chi)_{|T_\alpha(\Qp)} \cdot (\varepsilon^{-1} \circ \theta_{|T_\alpha(\Qp)})$, $\Pi_1'= \Indd s_\alpha (s_\beta(\chi)_{|T_\alpha(\Qp)}) \cdot (\varepsilon^{-1} \circ \theta_{|T_\alpha(\Qp)})$ et $\Pi_2=\Pi_2'=s_\beta(\chi)_{|T'(\Qp)} \cdot (\varepsilon^{-1} \circ \theta_{|T'(\Qp)})$, on voit qu'il existe une unique extension non scindée
\begin{equation*}
\E_\alpha \dfn (s_\beta(\chi) \cdot (\varepsilon^{-1} \circ \theta))_{|T'(\Qp)} \otimes_E \E_2
\end{equation*}
de $\Inda s_\alpha s_\beta (\chi) \epsth$ par $\Inda s_\beta(\chi) \epsth$ dans la catégorie des représentations continues unitaires admissibles de $G_\alpha(\Qp)$ sur $E$.
On déduit de \cite[Corollary 4.3.5]{Em1} que $\IndQp[P_\alpha^-] \E_\alpha$ est une extension non scindée de $\IndQp s_\alpha s_\beta (\chi) \epsth$ par $\IndQp s_\beta(\chi) \epsth$ dans la catégorie des représentations continues unitaires admissibles de $G(\Qp)$ sur $E$ et elle est unique en tant que telle d'après \cite[Théorème 1.1 (ii)]{JH}.
Pour prouver le théorème, il suffit donc de montrer que
\begin{equation} \label{extnul}
\Ext_{G(\Qp)}^1 \left(\IndQp[P_\alpha^-] \E_\alpha,\IndQp \chi \epsth\right) = 0.
\end{equation}

Soit $\E_\alpha^0 \subset \E_\alpha$ une boule stable par $G_\alpha(\Qp)$.
Pour tout entier $k\geq1$, on note $\chi_k$ l'image de $\chi$ dans $(\A{k})^\times$.
En utilisant pour tout entier $k \geq 1$ la suite exacte \cite[(3.7.6)]{Em2} pour le triplet $(G,P_\alpha,G_\alpha)$ avec $A=\A{k}$, $U=\E_\alpha^0/\pe^k\E_\alpha^0$ et $V=\IndQp \chi_k \omth$ et l'isomorphisme \eqref{Ord} avec $P=P_\alpha$, $L=G_\alpha$, $A=\A{k}$ et $U=\chi_k \omth$ et en tenant compte de \cite[Lemma 4.1.3]{Em1} et \cite[(B.1) et Proposition B.2]{JH}, on obtient une suite exacte de $E$-espaces vectoriels
\begin{multline} \label{SEExt}
0 \to \Ext^1_{G_\alpha(\Qp)} \left( \E_\alpha,\Inda \chi \epsth \right) \to \\
\Ext^1_{G(\Qp)} \left( \IndQp[P_\alpha^-] \E_\alpha,\IndQp \chi \epsth \right) \to \\
\Hom_{G_\alpha(\Qp)} \Big( \E_\alpha,E \otimes_{\Oe} \varprojlim_k \HOrdQp[P_\alpha] \left( \IndQp \chi_k \omth \right) \Big)
\end{multline}
(pour vérifier que la topologie de la limite projective coïncide avec la topologie $\pe$-adique, on procède comme dans le premier paragraphe de la preuve de \cite[Proposition 3.4.3]{Em1} en remarquant que $\HOrdQp[P_\alpha] \IndQp$ est exact à gauche).

Si $\chi_{|T'(\Qp)} \neq s_\beta(\chi)_{|T'(\Qp)}$, alors en utilisant \cite[Lemma A.6 (i)]{BH} avec $G_1=T'(\Qp)$, $G_2=\GL_2(\Qp)$, $\Pi_1=\chi_{|T'(\Qp)} \cdot (\varepsilon^{-1} \circ \theta_{|T'(\Qp)})$, $\Pi_1'=s_\beta(\chi)_{|T'(\Qp)} \cdot (\varepsilon^{-1} \circ \theta_{|T'(\Qp)})$, $\Pi_2=\Indd \chi_{|T_\alpha(\Qp)} \cdot (\varepsilon^{-1} \circ \theta_{|T_\alpha(\Qp)})$, $\Pi_2'=\E_2$, on obtient
\begin{equation} \label{extanul}
\Ext^1_{G_\alpha(\Qp)} \left( \E_\alpha,\Inda \chi \epsth \right) = 0
\end{equation}
car $\Hom_{G_1}(\Pi_1',\Pi_1)=0$ par hypothèse et $\Ext^1_{G_1}(\Pi_1',\Pi_1)=0$ d'après \cite[Proposition 5.1.6]{JH}.
Sinon, alors $\chi_{|T'(\Qp)} = s_\beta(\chi)_{|T'(\Qp)} = s_\alpha s_\beta(\chi)_{|T'(\Qp)}$, donc $\chi_{|T_\alpha(\Qp)} \neq s_\alpha(s_\beta(\chi)_{|T_\alpha(\Qp)})$ et en utilisant \cite[Lemma A.6 (i)]{BH} avec $G_1=\GL_2(\Qp)$, $G_2=T'(\Qp)$, $\Pi_1=\Indd \chi_{|T_\alpha(\Qp)} \cdot (\varepsilon^{-1} \circ \theta_{|T_\alpha(\Qp)})$, $\Pi_1'=\E_2$, $\Pi_2=\chi_{|T'(\Qp)} \cdot (\varepsilon^{-1} \circ \theta_{|T'(\Qp)})$, $\Pi_2'=s_\beta(\chi)_{|T'(\Qp)} \cdot (\varepsilon^{-1} \circ \theta_{|T'(\Qp)})$, on obtient encore l'égalité \eqref{extanul} car d'une part $\Hom_{G_1}(\Pi_1',\Pi_1)=0$ et d'autre part ou bien $\chi_{|T_\alpha(\Qp)} \neq s_\beta(\chi)_{|T_\alpha(\Qp)}$ d'où $\Ext_{G_1}^1(\Pi_1',\Pi_1)=0$ d'après \cite[Proposition 4.3.15 (1)]{Em2}, ou bien $\chi_{|T'(\Qp)} \neq s_\beta(\chi)_{|T'(\Qp)}$ d'où $\Hom_{G_2}(\Pi_2',\Pi_2)=0$.

En utilisant le théorème \ref{theo:HnOrd} avec $P=P_\alpha$, $L=G_\alpha$, $n=1$, $A=\A{k}$ et $U = \chi_k \omth$ pour tout entier $k \geq 1$, on obtient un isomorphisme continu $G_\alpha(\Qp)$-équivariant
\begin{multline} \label{H1OrdaInd}
E \otimes_{\Oe} \varprojlim_k \HOrdQp[P_\alpha] \left( \IndQp \chi_k \omth \right) \\
\cong \bigoplus_{\gamma \in \Delta-\{\alpha\}} E \otimes_{\Oe} \varprojlim_k \HOrdQp[P_\alpha] \left( \cindQp^{(B^- \dot{s}_\gamma P_\alpha)(\Qp)} \chi_k \omth \right)
\end{multline}
et pour tout $\gamma \in \Delta-\{\alpha\}$, les morphismes \eqref{HnOrd} avec $P=P_\alpha$, $L=L_\alpha$, $n=1$, $A=\A{k}$ et $U = \chi_k \omth$ pour tout entier $k \geq 1$ donnent, en tenant compte du fait que $s_\gamma(\theta)+\gamma=\theta$ (voir la preuve de \cite[Corollaire 4.2.7]{JH}), un morphisme continu $G_\alpha(\Qp)$-équivariant
\begin{multline} \label{H1Orda}
\Inda s_\gamma(\chi) \epsth \\
\to E \otimes_{\Oe} \varprojlim_k \HOrdQp[P_\alpha] \left( \cindQp^{(B^- \dot{s}_\gamma P_\alpha)(\Qp)} \chi_k \omth \right).
\end{multline}

Soit $\gamma \in \Delta-\{\alpha\}$.
Le morphisme \eqref{H1Orda} est injectif en restriction à la sous-$B_\alpha(\Qp)$-représentation fermée
\begin{equation*}
E \otimes_{\Oe} \varprojlim_k \Fil^0_{B_\alpha} \Inda s_\gamma(\chi_k) \omth.
\end{equation*}
Cette dernière est topologiquement irréductible (car résiduellement irréductible, voir la preuve de la proposition \ref{prop:HnOrdirr}), de codimension $1$ et son image par le morphisme \eqref{H1Orda} est encore de codimension $1$.
De plus $T(\Qp)$ agit sur ces représentations unidimensionnelles à travers le même caractère $s_\alpha(s_\gamma(\chi) \epsth)$.
Comme un caractère de $G_\alpha(\Qp)$ est déterminé par sa restriction à $T(\Qp)$, on en déduit un isomorphisme continu $G_\alpha(\Qp)$-équivariant
\begin{multline} \label{H1Ordass}
\Big( E \otimes_{\Oe} \varprojlim_k \HOrdQp[P_\alpha] \left( \cindQp^{(B^- \dot{s}_\gamma P_\alpha)(\Qp)} \chi_k \omth \right) \Big)^\sms \\
\cong \left( \Inda s_\gamma(\chi) \epsth \right)^\sms
\end{multline}
où l'exposant $^\sms$ désigne la semi-simplifiée dans la catégorie des représentations continues unitaires admissibles de $G_\alpha(\Qp)$ sur $E$.

Pour tout $\gamma \in \Delta-\{\alpha\}$, on a $s_\alpha s_\beta(\chi) \neq s_\gamma(\chi)$ (par hypothèse si $\gamma=\beta$ et d'après le point (i) du lemme \ref{lemm:gen} sinon) donc les représentations $\Inda s_\alpha s_\beta (\chi) \epsth$ et $\Inda s_\gamma (\chi) \epsth$ n'ont aucun constituant irréductible en commun.
Comme l'extension $\E_\alpha$ n'est pas scindée et la représentation $\Inda s_\beta(\chi) \epsth$ est topologiquement irréductible, on déduit de l'isomorphisme \eqref{H1OrdaInd} et des isomorphismes \eqref{H1Ordass} pour tout $\gamma \in \Delta-\{\alpha\}$ que
\begin{equation*}
\Hom_{G_\alpha(\Qp)} \Big( \E_\alpha,E \otimes_{\Oe} \varprojlim_k \HOrdQp[P_\alpha] \left( \IndQp \chi_k \omth \right) \Big)=0.
\end{equation*}
En utilisant la suite exacte \eqref{SEExt} et en tenant compte de l'égalité \eqref{extanul}, on en déduit l'égalité \eqref{extnul}.

\medskip

\emph{2\up{d} cas : $\chi'=\chi$ ou $\chi'=\chi''$.}

Dans ce cas, on déduit de \cite[Théorème 1.1 (i)]{JH} que $\chi'' = s_\alpha(\chi)$ avec $\alpha \in \Delta$ telle que $\chi \circ \alpha^\vee \neq 1$ (voir le point (i) de la remarque \ref{rema:gen}) et $\chi'$ est faiblement générique par hypothèse.
On prouve le théorème lorsque $\chi'=\chi$ (la preuve est similaire lorsque $\chi'=\chi''$).

Soient $\Et$ une auto-extension non scindée de $\IndQp \chi \epsth$ et $\Ct$ une extension non scindée de $\IndQp s_\alpha(\chi) \epsth$ par $\Et$ dans la catégorie des représentations continues unitaires admissibles de $G(\Qp)$ sur $E$.
Pour prouver le théorème, il suffit de montrer que $\Ct$ n'est pas une chaîne comme dans l'énoncé.

D'après \cite[Théorème 1.1 (iii)]{JH}, il existe une auto-extension non scindée $\E$ de $\chi \epsth$ dans la catégorie des représentations continues unitaires admissibles de $T(\Qp)$ sur $E$ et un isomorphisme continu $G(\Qp)$-équivariant $\Et \cong \IndQp \E$.

Soient $\E^0 \subset \E$ une boule stable par $T(\Qp)$.
Pour tout entier $k\geq1$, on note $\chi_k$ l'image de $\chi$ dans $(\A{k})^\times$.
En utilisant pour tout entier $k \geq 1$ la suite exacte \cite[(3.7.6)]{Em2} pour le triplet $(G,P_\alpha,G_\alpha)$ avec $A=\A{k}$, $U=\chi_k \omth$ et $V=\IndQp \E^0/\pe^k\E^0$, l'isomorphisme \eqref{Ord} avec $P=P_\alpha$, $L=G_\alpha$, $A=\A{k}$ et $U=\E^0/\pe^k\E^0$ et \cite[Corollaire 4.2.4 (i)]{JH} avec $A=\A{k}$ et $U=\E^0/\pe^k\E^0$ et en tenant compte de \cite[Lemma 4.1.3]{Em1} et \cite[(B.1) et Proposition B.2]{JH}, on obtient une suite exacte de $E$-espaces vectoriels
\begin{multline*}
0 \to \Ext^1_{T(\Qp)} \left( s_\alpha(\chi) \epsth,\E \right) \\
\to \Ext^1_{G(\Qp)} \left( \IndQp s_\alpha(\chi) \epsth,\Et \right) \\
\to \bigoplus_{\beta \in \Delta} \Hom_{T(\Qp)} \left( s_\alpha(\chi) \epsth,\E^\beta \otimes (\varepsilon^{-1} \circ \beta) \right).
\end{multline*}
Comme $s_\alpha(\chi) \neq \chi$, le premier terme non trivial de la suite exacte est nul d'après \cite[Proposition 5.1.6]{JH}.
Pour tout $\beta \in \Delta$, la représentation $\E^\beta \otimes (\omega^{-1} \circ \beta)$ est une auto-extension non scindée de $s_\beta(\chi) \epsth$ car $s_\beta(\theta)+\beta=\theta$ (voir la preuve de \cite[Corollaire 4.2.7]{JH}), donc la somme directe est de dimension $1$ (seul le terme correspondant à $\alpha$ est non nul car $\chi$ est faiblement générique).
On en déduit que le second terme non trivial de la suite exacte est de dimension $1$.
Comme $s_\alpha(\chi) \neq \chi$, on a une injection $E$-linéaire
\begin{multline*}
\Ext^1_{G(\Qp)} \left( \IndQp s_\alpha(\chi) \epsth,\IndQp \chi \epsth \right) \\
\hookrightarrow \Ext^1_{G(\Qp)} \left( \IndQp s_\alpha(\chi) \epsth,\Et \right)
\end{multline*}
dont la source est de dimension $1$ d'après \cite[Théorème 1.1 (ii)]{JH}, donc c'est un isomorphisme.
On en conclut qu'il existe une extension non scindée $\Et_\alpha$ de $\IndQp s_\alpha(\chi) \epsth$ par $\IndQp \chi \epsth$ dans la catégorie des représentations continues unitaires admissibles de $G(\Qp)$ sur $E$ et une injection continue $G(\Qp)$-équivariante $\Et_\alpha \hookrightarrow \Ct$ dont le quotient est isomorphe à $\IndQp \chi \epsth$, donc $\Ct$ n'est pas une chaîne comme dans l'énoncé.
\end{proof}

\begin{rema}
L'égalité \eqref{extnul} est encore vraie sans supposer les séries principales de l'énoncé topologiquement irréductibles, mais en supposant seulement $\chi \circ \beta^\vee \neq 1$ et $\chi \circ s_\beta(\alpha)^\vee \not \in \{1,\varepsilon\}$ (auquel cas il existe encore une unique extension non scindée $\E_2$ par une preuve identique à celle de \cite[Proposition B.2 (i)]{BH}). L'hypothèse $\chi \circ s_\beta(\alpha)^\vee \neq \varepsilon$ ne serait pas nécessaire non plus si la conjecture \ref{conj:HnOrd} était démontrée pour $P=P_\alpha$ et $n=1$.
\end{rema}

\subsection{La construction de Breuil-Herzig}

On garde les hypothèses précédentes sur $F$ et $G$ ainsi que la notation $\theta$.
On établit certaines propriétés de la construction de Breuil-Herzig dans la catégorie des représentations continues unitaires admissibles de $G(\Qp)$ sur $E$.
Les résultats analogues modulo $p$ se démontrent de façon analogue.

Si $\Pi$ est une représentation continue unitaire admissible de $G(\Qp)$ sur $E$, on note $\soc^\bullet \Pi$ sa filtration par le socle (définie par $\soc^{-1} \Pi = 0$ et $\soc^k \Pi/\soc^{k-1} \Pi \cong \soc(\Pi/\soc^{k-1} \Pi)$ pour tout $k \in \N$) et $\rad \Pi$ son radical, c'est-à-dire le noyau de la projection sur le cosocle (voir \cite[§~I.1]{Alp}).
On note que $\soc \Pi$ est toujours de longueur finie (car $\Pi$ est admissible) et que $\soc \Pi = 0$ si et seulement si $\Pi = 0$ (voir \cite[Lemma 5.8]{PasAdm}).
En revanche, on peut avoir $\rad \Pi = \Pi$ et $\Pi \neq 0$.

\medskip

Soient $\chi : T(\Qp) \to \Oe^\times \subset E^\times$ un caractère continu unitaire et $\Psi \subset \Phi^+$ un sous-ensemble fermé\footnote{Un sous-ensemble $\Psi \subset \Phi^+$ est fermé si $\alpha+\beta \in \Phi^+ \Rightarrow \alpha+\beta \in \Psi$  pour tous $\alpha,\beta \in \Psi$.}.
On pose
\begin{equation*}
W_\Psi \dfn \left\{ w \in W \mid w(\Psi) \subset \Phi^+\right\}.
\end{equation*}
Pour tout $w_\Psi \in W_\Psi$ et pour tout $I \subset \Delta \cap w_\Psi(\Psi)$ constitué de racines deux à deux orthogonales, on pose
\begin{equation*}
C_{w_\Psi,I} \dfn \IndQp \big( \prod_{\alpha \in I} s_\alpha \big) w_\Psi(\chi) \epsth.
\end{equation*}
D'après \cite[Conjecture 3.1.2]{BH}, ces représentations devraient être topologiquement irréductibles lorsque $\chi \circ \alpha^\vee \neq \varepsilon^{\pm1}$ pour tout $\alpha \in \Phi^+$.
La conjecture analogue modulo $p$ est vraie d'après \cite[Théorème 4]{Oll} lorsque $G=\GL_n$ et \cite[Theorem 1.3]{Abe} dans le cas général déployé.
En particulier si la réduction $\overline{\chi} : T(\Qp) \to \ke^\times$ de $\chi$ modulo $\pe$ vérifie $\overline{\chi} \circ \alpha^\vee \neq \omega^{\pm1}$ pour tout $\alpha \in \Phi^+$, alors ces représentations sont topologiquement irréductibles.

\medskip

Soit $w_\Psi \in W_\Psi$.
On rappelle la construction de Breuil-Herzig (voir \cite[§~3.3]{BH} avec $C_\rho$ et $w_{C_\rho}^{-1}$ au lieu de $\Psi$ et $w_\Psi$).
On suppose $\chi$ générique et $C_{w_\Psi,I}$ topologiquement irréductible pour tout $I \subset \Delta \cap w_\Psi(\Psi)$ constitué de racines deux à deux orthogonales.
Par généricité de $w_\Psi(\chi)$, ces représentations sont deux à deux non isomorphes d'après le point (ii) du lemme \ref{lemm:gen}.

\begin{rema} \label{rema:ext}
Soient $I,I' \subset \Delta \cap w_\Psi(\Psi)$ constitués de racines deux à deux orthogonales.
En utilisant le point (ii) du lemme \ref{lemm:gen} avec $(\prod_{\alpha \in I \cap I'} s_\alpha)w_\Psi(\chi)$ au lieu de $\chi$, on déduit de \cite[Théorème 1.1 (i)]{JH} qu'il existe une extension non scindée de $C_{I',w_\Psi}$ par $C_{I,w_\Psi}$ si et seulement si $\card(I \cup I' - I \cap I')=1$ ou $I'=I$.
\end{rema}

Soit $I \subset \Delta \cap w_\Psi(\Psi)$ constitué de racines deux à deux orthogonales.
On note $G_I \subset G$ le sous-groupe fermé engendré par $T$ et les sous-groupes radiciels correspondant aux racines dans $\pm I$.
D'après \cite[Lemma 3.1.4]{BH}, il existe un sous-tore $T' \subset T$ et un isomorphisme $G_I \cong T' \times \GL_2^I$ à travers lequel $T \cong T' \times \prod_{\alpha \in I} T_\alpha$ avec $T_\alpha$ un tore maximal déployé dans la copie de $\GL_2$ correspondant à $\alpha$.
Pour tout $\alpha \in I$, on note $\E_\alpha$ l'unique extension non scindée de $\Indd s_\alpha (w_\Psi(\chi)_{|T_\alpha(\Qp)}) \cdot (\varepsilon^{-1} \circ \theta_{|T_\alpha(\Qp)})$ par $\Indd w_\Psi(\chi)_{|T_\alpha(\Qp)} \cdot (\varepsilon^{-1} \circ \theta_{|T_\alpha(\Qp)})$ donnée par \cite[Proposition B.2 (i)]{BH} et on pose
\begin{equation*}
\Pi(\chi)_{w_\Psi,I} \dfn \Ind_{(B^-G_I)(\Qp)}^{G(\Qp)} \left( (w_\Psi(\chi) \epsth)_{|T'(\Qp)} \otimes_E \widehat{\bigotimes}_{E,\alpha \in I} \E_\alpha \right).
\end{equation*}
En utilisant \cite[Lemma A.6 (ii)]{BH}, on voit que les constituants irréductibles de $\Pi(\chi)_{w_\Psi,I}$ sont exactement les représentations $(C_{w_\Psi,I'})_{I' \subset I}$ avec multiplicité un et que pour tout $I' \subset I$, le degré de $C_{w_\Psi,I'}$ dans le gradué de la filtration par le socle de $\Pi(\chi)_{w_\Psi,I}$ est $\card I'$.
Le treillis des sous-représentations fermées de $\Pi(\chi)_{w_\Psi,I}$ est le treillis des parties fermées inférieurement\footnote{Une partie fermée inférieurement d'un ensemble ordonné $(X,\leq)$ est un sous-ensemble $Y \subset X$ vérifiant $x \leq y \Rightarrow x \in Y$ pour tous $x \in X$ et $y \in Y$.} de $(I,\subset)$.
En particulier, il a une structure d'\og hypercube \fg.

Pour tout $I \subset \Delta \cap w_\Psi(\Psi)$ constitué de racines deux à deux orthogonales et pour tout $I' \subset I$, on a une injection continue $G(\Qp)$-équivariante $\Pi(\chi)_{w_\Psi,I'} \hookrightarrow \Pi(\chi)_{w_\Psi,I}$ unique à multiplication par un scalaire près.
On fixe un système d'injections compatibles et on pose
\begin{equation*}
\Pi(\chi)_{\Psi,w_\Psi} \dfn \varinjlim_I \Pi(\chi)_{w_\Psi,I}
\end{equation*}
avec $I \subset \Delta \cap w_\Psi(\Psi)$ parmi les sous-ensembles de racines deux à deux orthogonales.
La représentation $\Pi(\chi)_{\Psi,w_\Psi}$ est de longueur finie sans multiplicité et elle est engendrée par les images des injections continues $G(\Qp)$-équivariantes $\Pi(\chi)_{w_\Psi,I} \hookrightarrow \Pi(\chi)_{\Psi,w_\Psi}$ avec $I \subset \Delta \cap w_\Psi(\Psi)$ parmi les sous-ensembles de racines deux à deux orthogonales.

\begin{theo} \label{theo:ord}
Soient $\chi : T(\Qp) \to \Oe^\times \subset E^\times$ un caractère continu unitaire générique, $\Psi \subset \Phi^+$ un sous-ensemble fermé et $w_\Psi \in W_\Psi$.
On suppose que pour tout $I \subset \Delta \cap w_\Psi(\Psi)$ constitué de racines deux à deux orthogonales, la représentation $C_{w_\Psi,I}$ est topologiquement irréductible.
Alors $\Pi(\chi)_{\Psi,w_\Psi}$ est la plus grande représentation continue unitaire admissible de $G(\Qp)$ sur $E$ dont le socle est $C_{w_\Psi,\emptyset}$ et dont les autres sous-quotients irréductibles sont des séries principales distinctes de $C_{w'_\Psi,\emptyset}$ pour tout $w'_\Psi \in W_\Psi$.
\end{theo}

\begin{rema}
Le théorème avec $\Psi=\Phi^+$ donne une classification de toutes les représentations continues unitaires admissibles de $G(\Qp)$ sur $E$ dont le socle est $\IndQp \chi \epsth$ et dont les autres sous-quotients irréductibles sont des séries principales distinctes du socle : ce sont exactement les sous-représentations fermées de $\Pi(\chi)_{\Phi^+,1}$.
\end{rema}

\begin{proof}
Soit $\Pi$ une représentation continue unitaire admissible de $G(\Qp)$ sur $E$ satisfaisant les conditions de l'énoncé.
Il faut montrer que l'on a une injection $\Pi \hookrightarrow \Pi(\chi)_{\Psi,w_\Psi}$.

Comme $\Pi(\chi)_{\Psi,w_\Psi}$ est de longueur finie, on peut procéder par induction.
Soient $\Pi' \subset \Pi$ une sous-représentation et $\chi' : T(\Qp) \to \Oe^\times \subset E^\times$ un caractère continu unitaire vérifiant $\chi' \neq w'_\Psi(\chi)$ pour tout $w'_\Psi \in W_\Psi$.
On suppose que l'on a une injection $\Pi' \hookrightarrow \Pi(\chi)_{\Psi,w_\Psi}$ et une suite exacte courte non scindée
\begin{equation*}
0 \to \Pi' \to \Pi \to C' \to 0
\end{equation*}
avec $C' \cong \IndQp \chi' \epsth$ topologiquement irréductible.

\medskip

\emph{Étape 1 : on définit une sous-représentation $\Pi_{C'}$ de $\Pi$ et un constituant irréductible $C$ de $\Pi_{C'}$.}

Soit $\Pi_{C'} \subset \Pi$ l'unique sous-représentation dont le cosocle est $C'$ à travers la composée $\Pi_{C'} \hookrightarrow \Pi \twoheadrightarrow C'$.
Par hypothèse, $C'$ n'est pas dans le socle de $\Pi$ donc $\rad \Pi_{C'} \neq 0$ et on a une injection $\rad \Pi_{C'} \subset \Pi' \hookrightarrow \Pi(\chi)_{\Psi,w_\Psi}$.

Soit $C$ un facteur irréductible dans le cosocle de $\rad \Pi_{C'}$.
Il existe $I \subset \Delta \cap w_\Psi(\Psi)$ constitué de racines deux à deux orthogonales tel que $C \cong C_{w_\Psi,I}$.
De plus, l'unique quotient de $\Pi_{C'}$ ayant pour constituants irréductibles $C$ et et $C'$ est une extension non scindée de $C'$ par $C$ (elle est induite par l'extension non scindée $\Pi_{C'}$ de $C'$ par $\rad \Pi_{C'}$ et la surjection $\rad \Pi_{C'} \twoheadrightarrow C$).

\medskip

\emph{Étape 2 : on montre qu'il existe $I' \subset \Delta \cap w_\Psi(\Psi)$ constitué de racines deux à deux orthogonales tel que $C' \cong C_{w_\Psi,I'}$.}

On suppose d'abord $I=\emptyset$, donc $C \cong C_{w_\Psi,\emptyset}$.
Par hypothèse, $C' \not \cong C_{w_\Psi,\emptyset}$ donc d'après \cite[Théorème 1.1 (i)]{JH} il existe $\alpha \in \Delta$ tel que $\chi' = s_\alpha w_\Psi(\chi)$.
Toujours par hypothèse, $s_\alpha w_\Psi \not \in W_\Psi$ donc $\alpha \in \Delta \cap w_\Psi(\Psi)$.
Ainsi, $C' \cong C_{w_\Psi,I'}$ avec $I'=\{\alpha\}$.

On suppose maintenant $I \neq \emptyset$ et on fixe $\beta \in I$.
Si $I=\{\beta\}$, alors $C_{w_\Psi,I-\{\beta\}} \cong C_{w_\Psi,\emptyset} \not \cong C'$ par hypothèse.
Sinon quitte à changer $\beta \in I$, on peut supposer $C' \not \cong C_{w_\Psi,I-\{\beta\}}$.
L'unique quotient de $\Pi_{C'}$ ayant pour socle $C_{w_\Psi,I-\{\beta\}}$ a pour cosocle $C'$ et admet $C$ comme constituant irréductible intermédiaire.
On déduit théorème \ref{theo:chaine} qu'il admet un autre constituant irréductible intermédiaire $C'' \not \cong C$ parmi les facteurs irréductibles du cosocle de $\rad \Pi_{C'}$.
On a donc $C'' \cong C_{w_\Psi,I''}$ avec $I'' \subset \Delta \cap w_\Psi(\Psi)$ constitué de racines deux à deux orthogonales et il existe $\alpha \in I''$ tel que $I''-\{\alpha\}=I-\{\beta\}$ et $\alpha \neq \beta$.
En utilisant la remarque \ref{rema:ext}, on voit qu'il n'existe pas d'extension non scindée de $C$ par $C''$ ou de $C''$ par $C$.
Comme $C'$ admet des extensions non scindées par $C$ et $C''$, on a $C' \not \cong C$ et $C' \not \cong C''$ et on déduit de \cite[Théorème 1.1 (i)]{JH} qu'il existe $\alpha',\beta' \in \Delta$ tels que
\begin{equation*}
\chi' = s_{\alpha'} s_\alpha \big(\prod_{\gamma \in I-\{\beta\}} s_\gamma\big) w_\Psi(\chi) = s_{\beta'} s_\beta \big(\prod_{\gamma \in I-\{\beta\}} s_\gamma\big) w_\Psi(\chi)
\end{equation*}
avec $\alpha \neq \alpha'$ et $\beta \neq \beta'$ (car $C' \not \cong C_{w_\Psi,I-\{\beta\}}$ par hypothèse).
En utilisant le point (iii) du lemme \ref{lemm:gen} avec $(\prod_{\gamma \in I-\{\beta\}} s_\gamma) w_\Psi(\chi)$ au lieu de $\chi$, on en déduit que $s_{\alpha'} s_\alpha = s_{\beta'} s_\beta$, donc $\alpha'=\beta$, $\beta'=\alpha$ et $\alpha \perp \beta$.
Ainsi, $C' \cong C_{w_\Psi,I'}$ avec $I' = I \cup \{\alpha\}$.

\medskip

\emph{Étape 3 : on montre que $\rad \Pi_{C'} \cong \rad \Pi(\chi)_{w_\Psi,I'}$.}

On déduit de l'étape 2 que le cosocle de $\rad \Pi_{C'}$ est un facteur direct du cosocle de $\rad \Pi(\chi)_{w_\Psi,I'}$, d'où une injection $\rad \Pi_{C'} \hookrightarrow \rad \Pi(\chi)_{w_\Psi,I'}$.
Pour montrer que c'est un isomorphisme, il suffit de vérifier que pour tout $\beta \in I'$, la représentation $C_{w_\Psi,I'-\{\beta\}}$ apparaît dans $\Pi_{C'}$.

Soit $\beta \in I'$.
Par l'absurde, on suppose que $C_{w_\Psi,I'-\{\beta\}}$ n'apparaît pas dans $\Pi_{C'}$.
D'après l'étape 2, on a $I' = I \cup \{\alpha\}$ et nécessairement $\alpha \neq \beta$.
En utilisant la remarque \ref{rema:ext}, on voit que $C$ et $C_{w_\Psi,I'-\{\beta\}}$ sont les seuls constituants irréductibles $C''$ de $\Pi(\chi)_{\Psi,w_\Psi}$ tels qu'il existe des extensions non scindées de $C'$ par $C''$ et de $C''$ par $C_{w_\Psi,I-\{\beta\}}$.
Ainsi, l'unique quotient de $\Pi_{C'}$ ayant pour socle $C_{w_\Psi,I-\{\beta\}}$ a pour cosocle $C'$ et pour unique constituant irréductible intermédiaire $C$.
Or, une telle représentation n'existe pas d'après le théorème \ref{theo:chaine}, d'où la contradiction.

\medskip

\emph{Étape 4 : on montre que $\Pi_{C'} \cong \Pi(\chi)_{w_\Psi,I'}$.}

En utilisant les étapes 2 et 3, on voit qu'il suffit de montrer que
\begin{equation} \label{Ext1I}
\dim_E \Ext^1_{G(\Qp)} \left( C_{w_\Psi,I'},\rad \Pi(\chi)_{w_\Psi,I'} \right) = 1.
\end{equation}
On procède par récurrence sur $\card I'$.
Si $I'=\{\alpha\}$, alors $\rad \Pi(\chi)_{w_\Psi,I'} \cong C_{w_\Psi,\emptyset}$ et l'égalité \eqref{Ext1I} résulte de \cite[Théorème 1.1 (ii)]{JH}.
On suppose $\card I'>1$ et on fixe $\alpha \in I'$.
Dans ce cas, la suite exacte courte non scindée
\begin{equation*}
0 \to \Pi(\chi)_{w_\Psi,I'-\{\alpha\}} \to \Pi(\chi)_{w_\Psi,I'} \to \Pi(s_{w_\Psi^{-1}(\alpha)}(\chi))_{w_\Psi,I'-\{\alpha\}} \to 0
\end{equation*}
induit une suite exacte courte non scindée
\begin{equation*}
0 \to \Pi(\chi)_{w_\Psi,I'-\{\alpha\}} \to \rad \Pi(\chi)_{w_\Psi,I'} \to \rad \Pi(s_{w_\Psi^{-1}(\alpha)}(\chi))_{w_\Psi,I'-\{\alpha\}} \to 0,
\end{equation*}
d'où une suite exacte
\begin{multline*}
\Ext_{G(\Qp)}^1 \left( C_{w_\Psi,I'},\Pi(\chi)_{w_\Psi,I'-\{\alpha\}} \right) \to \Ext_{G(\Qp)}^1 \left( C_{w_\Psi,I'},\rad \Pi(\chi)_{w_\Psi,I'} \right) \\
\to \Ext_{G(\Qp)}^1 \left( C_{w_\Psi,I'},\rad \Pi(s_{w_\Psi^{-1}(\alpha)}(\chi))_{w_\Psi,I'-\{\alpha\}} \right).
\end{multline*}
On montre que le premier terme de la suite exacte est nul.
Par l'absurde, on suppose qu'il existe une extension non scindée $\widetilde{\Pi}$ de $C_{w_\Psi,I'}$ par $\Pi(\chi)_{w_\Psi,I'-\{\alpha\}}$.
En utilisant la remarque \ref{rema:ext}, on voit que pour tout $\beta \in I'-\{\alpha\}$, l'unique quotient de $\widetilde{\Pi}$ ayant pour socle $C_{w_\Psi,I'-\{\alpha,\beta\}}$ a pour cosocle $C_{w_\Psi,I'}$ et pour unique constituant irréductible intermédiaire $C_{w_\Psi,I'-\{\alpha\}}$.
Or, une telle représentation n'existe pas d'après le théorème \ref{theo:chaine}, d'où la contradiction.
Comme le dernier terme de la suite exacte est de dimension $1$ par hypothèse de récurrence avec $s_{w_\Psi^{-1}(\alpha)}(\chi)$ au lieu de $\chi$, on en déduit l'égalité \eqref{Ext1I}.

\medskip

\emph{Étape 5 : on montre que $\Pi$ est sans multiplicité.}

Comme $\Pi'$ est sans multiplicité, il suffit de vérifier que $C'$ apparaît avec multiplicité un dans $\Pi$.
Si $C'$ apparaît dans $\Pi'$, alors son degré dans le gradué de la filtration par le socle de $\Pi'$ est $d = \card I'$ (car on a une injection $\Pi' \hookrightarrow \Pi(\chi)_{\Psi,w_\Psi}$), donc il suffit de vérifier que $C'$ apparaît avec multiplicité un dans $\soc^d \Pi/\soc^{d-1} \Pi$.
Comme $\soc^{d-1} \Pi \cong \soc^{d-1} \Pi'$, on a une suite exacte
\begin{multline*}
\Hom_{G(\Qp)} \left( C',\soc^d \Pi \right) \to \Hom_{G(\Qp)} \left( C',\soc^d \Pi/\soc^{d-1} \Pi \right) \\
\to \Ext^1_{G(\Qp)} \left( C',\soc^{d-1} \Pi' \right).
\end{multline*}
Le premier terme est nul (car $C'$ n'est pas dans le socle de $\Pi$ par hypothèse), donc il suffit de montrer que le dernier terme de cette suite exacte est de dimension au plus $1$.
Or, on a une suite exacte
\begin{multline*}
\Ext^1_{G(\Qp)} \left( C',\rad \Pi_{C'} \right) \to \Ext^1_{G(\Qp)} \left( C',\soc^{d-1} \Pi' \right) \\
\to \Ext^1_{G(\Qp)} \left( C',\soc^{d-1} \Pi'/\rad \Pi_{C'} \right).
\end{multline*}
En utilisant la remarque \ref{rema:ext}, on voit que le dernier terme de la suite exacte est nul.
En utilisant l'égalité \eqref{Ext1I} avec les étapes 2 et 3, on en déduit que le second terme de la suite exacte est de dimension au plus $1$.

\medskip

\emph{Étape finale : on termine la démonstration.}

On a des injections $\Pi' \hookrightarrow \Pi(\chi)_{\Psi,w_\Psi}$ et $\Pi_{C'} \cong \Pi(\chi)_{w_\Psi,I'} \hookrightarrow \Pi(\chi)_{\Psi,w_\Psi}$, que l'on peut choisir égales en restriction à $\Pi' \cap \Pi_{C'} \cong \rad \Pi(\chi)_{w_\Psi,I'}$ (car l'injection $\rad \Pi(\chi)_{w_\Psi,I'} \hookrightarrow \Pi(\chi)_{\Psi,w_\Psi}$ est unique à multiplication par un scalaire près).
Comme $\Pi=\Pi'+\Pi_{C'}$ est de longueur finie sans multiplicité, les injections précédentes induisent une injection $\Pi \hookrightarrow \Pi(\chi)_{\Psi,w_\Psi}$.
\end{proof}

\begin{coro} \label{coro:conjBH}
\cite[Conjecture 3.5.1]{BH} est vraie.
\end{coro}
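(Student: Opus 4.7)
Le plan est de déduire ce corollaire directement du théorème \ref{theo:ord}, qui en fournit une version nettement plus forte. Rappelons que la conjecture de Breuil-Herzig prédit que chaque facteur direct indécomposable $\Pi(\chi)_{\Psi,w_\Psi}$ de $\Pi(\chi)_\Psi$ est uniquement déterminé par les gradués de sa filtration par le socle ; en particulier il est déterminé à isomorphisme près par son socle $\IndQp w_\Psi(\chi) \epsth$ et par l'ensemble de ses constituants irréductibles (chacun comptés avec multiplicité un).

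Premièrement, je vérifierais que les hypothèses de la construction de Breuil-Herzig (notamment l'irréductibilité topologique de toutes les représentations $C_{w_\Psi,I}$ pour $I \subset \Delta \cap w_\Psi(\Psi)$ constitué de racines deux à deux orthogonales) sont bien celles sous lesquelles $\Pi(\chi)_{\Psi,w_\Psi}$ est défini, de sorte que le théorème \ref{theo:ord} s'applique à $\chi_\rho$ et $\Psi_\rho$ associés à une représentation galoisienne ordinaire générique $\rho$.

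Deuxièmement, soit $\Pi'_{w_\Psi}$ une autre représentation continue unitaire admissible de $G(\Qp)$ sur $E$ dont le socle est $\IndQp w_\Psi(\chi) \epsth$ et dont les constituants irréductibles coïncident (avec multiplicité un) avec ceux de $\Pi(\chi)_{\Psi,w_\Psi}$, c'est-à-dire les $C_{w_\Psi,I}$ pour $I \subset \Delta \cap w_\Psi(\Psi)$ formé de racines deux à deux orthogonales. Par généricité de $\chi$ et le point (ii) du lemme \ref{lemm:gen}, ces constituants sont deux à deux non isomorphes, donc en particulier distincts de $C_{w'_\Psi,\emptyset} = \IndQp w'_\Psi(\chi) \epsth$ pour tout $w'_\Psi \in W_\Psi$ différent de $w_\Psi$. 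Les hypothèses du théorème \ref{theo:ord} étant satisfaites par $\Pi'_{w_\Psi}$, celui-ci fournit une injection continue $G(\Qp)$-équivariante $\Pi'_{w_\Psi} \hookrightarrow \Pi(\chi)_{\Psi,w_\Psi}$. Comme $\Pi'_{w_\Psi}$ et $\Pi(\chi)_{\Psi,w_\Psi}$ ont par hypothèse les mêmes constituants irréductibles avec les mêmes multiplicités (à savoir un), cette injection est en fait un isomorphisme, d'où l'unicité de $\Pi(\chi)_{\Psi,w_\Psi}$ à isomorphisme près. Appliquer ce résultat à chaque $w_\Psi \in W_\Psi$ donne l'unicité de chaque facteur direct indécomposable de $\Pi(\chi)_\Psi$, ce qui est exactement le contenu de \cite[Conjecture 3.5.1]{BH}.

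Il n'y a donc pas de réelle difficulté nouvelle à ce stade : tout le travail a été fait dans le théorème \ref{theo:ord}, dont la preuve repose sur l'inexistence de chaînes (théorème \ref{theo:chaine}), elle-même établie via le calcul du $\delta$-foncteur $\HOrd$ sur une série principale. L'unique point délicat serait de s'assurer qu'on interprète correctement la formulation originale de Breuil-Herzig ; plus précisément, si l'énoncé de \cite[Conjecture 3.5.1]{BH} demande l'unicité à partir des gradués de la filtration par le socle (information strictement plus riche que celle du socle plus les constituants), alors \emph{a fortiori} notre argument s'applique, puisque ces gradués déterminent en particulier le socle et l'ensemble des constituants avec multiplicité.
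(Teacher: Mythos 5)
Votre argument suit exactement la même démarche que la preuve du papier : on extrait du théorème \ref{theo:ord} une version plus forte de la conjecture (unicité de $\Pi(\chi)_{\Psi,w_\Psi}$ étant donné son socle $C_{w_\Psi,\emptyset}$ et ses constituants irréductibles avec multiplicité un), ce qui implique \emph{a fortiori} l'unicité étant donné le gradué de la filtration par le socle demandée par \cite[Conjecture 3.5.1]{BH}. La seule différence est que vous explicitez davantage l'argument d'injection réciproque entre $\Pi'_{w_\Psi}$ et $\Pi(\chi)_{\Psi,w_\Psi}$, là où le papier se contente de constater que le théorème \ref{theo:ord} contient déjà cet énoncé d'unicité plus fort ; le contenu mathématique est identique.
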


\begin{proof}
La conjecture de Breuil-Herzig affirme que la représentation $\Pi(\chi)_{\Psi,w_\Psi}$ est unique étant donné le gradué de sa filtration par le socle.
Le théorème \ref{theo:ord} répond à une question plus forte de Breuil-Herzig : la représentation $\Pi(\chi)_{\Psi,w_\Psi}$ est l'unique représentation continue unitaire admissible de $G(\Qp)$ sur $E$ de longueur finie sans multiplicité, dont le socle est $C_{w_\Psi,\emptyset}$ et dont les constituants irréductibles sont exactement les $C_{w_\Psi,I}$ avec $I \subset \Delta \cap w_\Psi(\Psi)$ parmi les sous-ensembles de racines deux à deux orthogonales.
\end{proof}

\begin{rema}
Pour la construction de $\Pi(\chi)_{\Psi,w_\Psi}$ et la preuve de la conjecture de Breuil-Herzig (ou de la question plus forte), il suffit de supposer $\chi \circ \alpha^\vee \neq 1$ pour tout $\alpha \in w_\Psi^{-1}(\Delta) \cap \Psi$ et $C_{w_\Psi,I}$ topologiquement irréductible pour tout $I \subset \Delta \cap w_\Psi(\Psi)$ constitué de racines deux à deux orthogonales.
\end{rema}

On suppose $\chi$ fortement générique et $C_{w_\Psi,I}$ topologiquement irréductible pour tout $w_\Psi \in W_\Psi$ et pour tout $I \subset \Delta \cap w_\Psi(\Psi)$ constitué de racines deux à deux orthogonales.
Par forte généricité de $\chi$, ces représentations sont deux à deux non isomorphes (car les éléments $w \in W$ de la forme $w=(\prod_{\alpha \in I} s_\alpha)w_\Psi$ avec $w_\Psi \in W_\Psi$ et $I \subset \Delta \cap w_\Psi(\Psi)$ constitué de racines deux à deux orthogonales sont deux à deux distincts puisque $I = \Delta \cap w(-\Psi)$).
On pose
\begin{equation*}
\Pi(\chi)_\Psi \dfn \bigoplus_{w_\Psi \in W_\Psi} \Pi(\chi)_{\Psi,w_\Psi}.
\end{equation*}

\begin{coro} \label{coro:ord}
Soient $\chi : T(\Qp) \to \Oe^\times \subset E^\times$ un caractère continu unitaire fortement générique et $\Psi \subset \Phi^+$ un sous-ensemble fermé.
On suppose que pour tout $w_\Psi \in W_\Psi$ et pour tout $I \subset \Delta \cap w_\Psi(\Psi)$ constitué de racines deux à deux orthogonales, la représentation $C_{w_\Psi,I}$ est topologiquement irréductible.
Alors $\Pi(\chi)_\Psi$ est la plus grande représentation continue unitaire admissible de $G(\Qp)$ sur $E$ satisfaisant
\begin{enumerate}
\item $\soc \Pi(\chi)_\Psi \cong \bigoplus_{w_\Psi \in W_\Psi} \IndQp w_\Psi(\chi) \epsth$ ;
\item les sous-quotients irréductibles de $\Pi(\chi)_\Psi$ sont des séries principales ;
\item les facteurs irréductibles de $\soc \Pi(\chi)_\Psi$ apparaissent avec multiplicité un dans $\Pi(\chi)_\Psi$ (donc seulement dans le socle).
\end{enumerate}
\end{coro}

\begin{proof}
Soit $\Pi$ une représentation continue unitaire admissible de $G(\Qp)$ sur $E$ satisfaisant les points (i), (ii) et (iii) de l'énoncé.
Pour tout $w_\Psi \in W_\Psi$, on note $\Pi_{w_\Psi}$ l'unique quotient de $\Pi$ dont le socle est $C_{w_\Psi,\emptyset}$.
D'après le théorème \ref{theo:ord}, on a une injection $\Pi_{w_\Psi} \hookrightarrow \Pi(\chi)_{\Psi,w_\Psi}$.
En particulier, $\bigoplus_{w_\Psi \in W_\Psi} \Pi_{w_\Psi}$ est de longueur finie sans multiplicité, d'où un isomorphisme $\Pi \cong \bigoplus_{w_\Psi \in W_\Psi} \Pi_{w_\Psi}$.
Les injections précédentes induisent donc une injection $\Pi \hookrightarrow \Pi(\chi)_\Psi$.
\end{proof}

\subsection{\texorpdfstring{Conjecture sur les extensions modulo $p$}{Conjecture sur les extensions modulo p}}

On garde seulement les hypothèses précédentes sur $G$.
Soient $P \subset G$ un sous-groupe parabolique standard et $L \subset P$ le sous-groupe de Levi standard.
On note $P^- \subset G$ le sous-groupe parabolique opposé à $P$ par rapport à $L$, on reprend les notations de la sous-section \ref{ssec:fil} et on pose
\begin{equation*}
\Delta_L^\perp \dfn \{\alpha \in \Delta \mid \text{$\alpha \perp \beta$ pour tout $\beta \in \Delta_L$}\}.
\end{equation*}
Tout $\alpha \in \Delta_L^\perp$ se prolonge de façon unique en un caractère algébrique de $L$ et l'action par conjugaison de $\ds_\alpha$ stabilise $L$.
Si $\pi$ est une représentation lisse de $L(F)$ sur $A$ et $\alpha \in \Delta_L^\perp$, on note $\pi^\alpha$ la représentation lisse de $L(F)$ sur $A$ dont le $A$-module sous-jacent est $\pi$ et sur lequel $l \in L(F)$ agit à travers $\ds_\alpha l \ds_\alpha$ ($\pi^\alpha$ ne dépend pas du choix du représentant $\ds_\alpha$ de $s_\alpha$ à isomorphisme près).

\begin{defi}
On dit qu'une représentation lisse admissible absolument irréductible $\pi$ de $L(F)$ sur $\ke$ est \emph{supersingulière} si la représentation lisse admissible irréductible $\Fpbar \otimes_{\ke} \pi$ de $L(F)$ sur $\Fpbar$ est supersingulière (voir \cite[Definition 4.7]{Her}).
\end{defi}

\begin{rema}
Les caractères lisses $F^\times \to \ke^\times$ sont des représentations supersingulières de $\GL_1(F)$.
\end{rema}

La conjecture suivante a été suggérée par Breuil lorsque $G=\GL_n$.
Les extensions sont calculées dans les catégories de représentations lisses admissibles sur $\ke$.
Dans les points (iii) et (iv), la condition \og Sinon \fg{} signifie que les conditions du point (ii) ne sont pas toutes satisfaites.

\begin{conj} \label{conj:ext}
Soient $P,P' \subset G$ deux sous-groupes paraboliques standards, $L \subset P,L' \subset P'$ les sous-groupes de Levi standards et $\pi,\pi'$ des représentations supersingulières de $L(F),L'(F)$ respectivement sur $\ke$.
On suppose $\IndF[P^-] \pi, \IndF[P'^-] \pi'$ irréductibles ou $p \neq 2$.
\begin{enumerate}
\item Si $P \not \subset P'$ et $P' \not \subset P$, alors
\begin{equation*}
\Ext^1_{G(F)} \left( \IndF[P'^-] \pi',\IndF[P^-] \pi \right) = 0.
\end{equation*}
\item Si $F=\Qp$, $P'=P$ et $\pi' \cong \pi^\alpha \otimes (\oma) \not \cong \pi$ avec $\alpha \in \Delta_L^\perp$, alors
\begin{equation*}
\dim_{\ke} \Ext^1_{G(\Qp)} \left( \IndQp[P^-] \pi',\IndQp[P^-] \pi \right) = 1.
\end{equation*}
\item Sinon si $P' \subset P$, alors le foncteur $\IndF[P^-]$ induit un isomorphisme $\ke$-linéaire
\begin{equation*}
\Ext^1_{L(F)} \left( \Ind_{(P'^- \cap L)(F)}^{L(F)} \pi',\pi \right) \iso \Ext^1_{G(F)} \left( \IndF[P'^-] \pi',\IndF[P^-] \pi \right).
\end{equation*}
\item Sinon si $P \subset P'$, alors le foncteur $\IndF[P'^-]$ induit un isomorphisme $\ke$-linéaire
\begin{equation*}
\Ext^1_{L'(F)} \left( \pi',\Ind_{(P^- \cap L')(F)}^{L'(F)} \pi \right) \iso \Ext^1_{G(F)} \left( \IndF[P'^-] \pi',\IndF[P^-] \pi \right).
\end{equation*}
\end{enumerate}
\end{conj}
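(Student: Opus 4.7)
La stratégie naturelle est d'exploiter la seconde adjonction d'Emerton \cite[Théorème 4.4.6]{Em1} entre l'induction parabolique $\Ind[P'^-]$ et le foncteur des parties ordinaires $\operatorname{Ord}_{P'(F)}$, combinée avec les parties ordinaires dérivées $\mathrm{H}^\bullet\operatorname{Ord}_{P'(F)}$. Appliquée à $V = \Ind[P^-] \pi$, la suite exacte \cite[(3.7.6)]{Em2} donne
\begin{equation*}
0 \to \Ext^1_{L'(F)}(\pi', \operatorname{Ord}_{P'(F)} V) \to \Ext^1_{G(F)}(\Ind[P'^-] \pi', V) \to \Hom_{L'(F)}(\pi', \mathrm{H}^1\operatorname{Ord}_{P'(F)} V),
\end{equation*}
ramenant la question au calcul de $\operatorname{Ord}_{P'(F)}(\Ind[P^-] \pi)$ et $\mathrm{H}^1\operatorname{Ord}_{P'(F)}(\Ind[P^-] \pi)$, ainsi qu'à la compréhension de l'image de la dernière flèche.

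Pour effectuer ces calculs, on adapte la méthode des sections \ref{sec:2} et \ref{ssec:calc} en construisant une filtration de Bruhat de $\Ind[P^-] \pi$ par des sous-$P'(F)$-représentations, dont les gradués sont indexés par les représentants de longueur minimale des doubles classes $W_{L'} \backslash W / W_L$. Pour chaque tel $w$, le gradué correspondant est une induite compacte depuis la cellule $(P'^- \dot{w} P)(F)$ et se dévisse sur le sous-groupe unipotent $N_{P'} \cap \dot{w} N_P^- \dot{w}^{-1}$. En généralisant les Lemmes \ref{lemm:H1}, \ref{lemm:H2} et \ref{lemm:H3} au cas où $U$ est une représentation supersingulière de $L$ (au lieu d'une représentation localement admissible de $T$), on s'attend à ce que $\operatorname{Ord}_{P'(F)}(\Ind[P^-] \pi)$ soit concentré sur la double classe triviale et à ce que $\mathrm{H}^1\operatorname{Ord}_{P'(F)}(\Ind[P^-] \pi)$ soit nul lorsque $F \neq \Qp$ (par analogie avec le Corollaire \ref{coro:H1OrdF}) et sinon, n'ait de contribution non nulle que pour les doubles classes de longueur $1$.

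L'analyse par cas procède alors ainsi : dans (i), aucune double classe ne fournit une sous-représentation admettant $\pi'$ comme sous-objet, les deux termes extrêmes sont nuls. Dans (iii), la double classe triviale fournit $\operatorname{Ord}_{P'(F)}(\Ind[P^-] \pi) \cong \operatorname{Ind}^{L(F)}_{(P'^- \cap L)(F)} \pi$ et la contribution de $\mathrm{H}^1\operatorname{Ord}_{P'(F)}$ ne rencontre pas $\pi'$, d'où l'isomorphisme cherché (le cas (iv) est dual via les parties ordinaires en degré $\dim N$, voir la remarque à la fin de la sous-section 3.3). Dans (ii), on a $\operatorname{Ord}_{P(F)}(\Ind[P^-] \pi) \cong \pi$ par \eqref{Ord}, dont le $\Ext^1_{L(F)}$ avec $\pi'$ est nul par distinction des caractères centraux, tandis que $\mathrm{H}^1\operatorname{Ord}_{P(F)}(\Ind[P^-] \pi)$ (lorsque $F=\Qp$) fait intervenir pour chaque $\alpha \in \Delta_L^\perp$ une représentation dont le seul facteur pouvant contenir $\pi'$ est $\pi^\alpha \otimes (\oma)$ ; la surjectivité de l'application d'arête se vérifie alors en exhibant, pour chaque tel $\alpha$, une extension non scindée explicite (dans l'esprit de la construction $\Ea$ du Lemme \ref{lemm:chaine1} utilisant le cas $\GL_2$).

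L'obstacle principal est la Conjecture \ref{conj:HnOrd} : hors du cas où les induites intervenant sont irréductibles (Proposition \ref{prop:HnOrdirr}), le calcul de $\mathrm{H}^1\operatorname{Ord}_{P'(F)}$ sur une induite parabolique depuis un Levi non torique n'est que partiel (Corollaire \ref{coro:HnOrd}). Ceci impose en pratique de se restreindre au cas où l'une au moins des représentations $\pi, \pi'$ est un caractère, c'est-à-dire au cas des extensions par une série principale, traité dans les Propositions \ref{prop:conj1} et \ref{prop:conj2} ci-après. Un obstacle secondaire est la surjectivité de l'application d'arête dans (ii), dont la preuve requiert une construction fine d'extensions combinant la théorie des représentations de $\GL_2(\Qp)$ (via \cite[Proposition B.2]{BH}) et le contrôle précis de $\operatorname{Ord}_{P(F)}$ sur les extensions non scindées apparaissant.
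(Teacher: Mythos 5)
The statement you were asked to prove is labelled \emph{Conjecture} \ref{conj:ext} in the paper, and the paper does not prove it: after stating it, the author only establishes the special case of extensions by a principal series (Proposition \ref{prop:conj1} for $F=\Qp$ under a genericity hypothesis, Proposition \ref{prop:conj2} for $F\neq\Qp$), leaving the general statement open. Your proposal correctly recognises this: it is a strategy outline rather than a proof, it reproduces the machinery the paper actually uses (the exact sequence \eqref{SEadj} obtained from Emerton's adjunction and the long exact sequence for $\HOrd[\bullet]$, Bruhat filtrations of the induced representation, the concentration of the derived ordinary parts predicted by Conjecture \ref{conj:HnOrd}, the $\GL_2$ extension construction of \cite[Proposition B.2]{BH} for the surjectivity of the edge map), and it names the genuine obstruction --- Conjecture \ref{conj:HnOrd} is only proved when the relevant induced representations on the Levi are irreducible (Proposition \ref{prop:HnOrdirr}), which in practice forces one to take $\pi$ a character, exactly the situation of Propositions \ref{prop:conj1} and \ref{prop:conj2}. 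So as a description of the state of the art, what you wrote matches the paper.

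One small technical slip: for a Bruhat filtration of $\Ind[P^-]\pi$ by $P'(F)$-subrepresentations, the relevant cells are $(P^-\dw P')(F)$ with $w$ running over the minimal-length representatives of $W_L\backslash W/W_{L'}$ (the left parabolic must match the inducing subgroup $P^-$ and the right must match the one used for ordinary parts), and the unipotent group entering the computation at each cell is of the shape $N_{P'}\cap\dw^{-1}N_P\dw$; you wrote $(P'^-\dw P)(F)$ and $N_{P'}\cap\dw N_P^-\dw^{-1}$, interchanging the two sides. This does not affect the overall picture but would matter if one tried to carry the computation through. Beyond that cosmetic point, the remaining work needed to turn your outline into a proof --- Conjecture \ref{conj:HnOrd} for $\HOrd[1](\Ind[P^-]\pi)$ with $\pi$ supersingular, the vanishing of the $\operatorname{Ord}_{P'(F)}$-term in case (i), and the surjectivity of the edge map in case (ii) --- is precisely the open content of the conjecture.
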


\begin{rema}
Sous les conditions du point (ii), il n'existe pas d'extension non scindée de $\pi'$ par $\pi$ (car leurs caractères centraux sont distincts) mais on peut construire une extension non scindée entre leurs induites par induction parabolique à partir d'une extension non scindée entre deux séries principales de $\GL_2(\Qp)$.
\end{rema}

On démontre la conjecture \ref{conj:ext} pour les extensions par une série principale (sous des hypothèses de généricité lorsque $F=\Qp$).
Pour toute représentation lisse localement admissible $\pi$ de $L(F)$ sur $\ke$ et pour tout caractère lisse $\chi : T(F) \to \ke^\times$, on a une suite exacte de $\ke$-espaces vectoriels
\begin{multline} \label{SEadj}
0 \to \Ext^1_{L(F)} \left( \pi,\IndFL \chi \right) \to \Ext^1_{G(F)} \left( \IndF[P^-] \pi,\IndF \chi \right) \\
\to \Hom_{L(F)} \left( \pi,\HOrdF[1] \left( \IndF \chi \right) \right)
\end{multline}
(voir la suite exacte \cite[(3.7.6)]{Em2} avec $A=\ke$, $U=\pi$ et $V=\IndF \chi$ en tenant compte de l'isomorphisme \eqref{Ord} avec $A=\ke$ et $U=\chi$).

\begin{prop} \label{prop:conjQp}
On suppose $F=\Qp$.
Soient $\pi$ une représentation lisse admissible absolument irréductible de $L(\Qp)$ sur $\ke$ et $\chi : T(\Qp) \to \ke^\times$ un caractère lisse.
On suppose d'une part $L=T$ ou $\pi_{N_L(\Qp)}=0$ ; d'autre part $\IndQp \chi$ irréductible ou $p \neq 2$.
\begin{enumerate}
\item Si $P=B$ et $\pi = s_\alpha(\chi) \cdot (\oma) \neq \chi$ avec $\alpha \in \Delta$, alors
\begin{equation*}
\dim_{\ke} \Ext^1_{G(\Qp)} \left( \IndQp \pi,\IndQp \chi \right) = 1.
\end{equation*}
\item Sinon si $(s_\alpha(\chi) \cdot (\oma)) \circ \beta^\vee \neq 1$ pour tous $\alpha \in \Delta-\Delta_L$ et $\beta \in \Delta_L$, alors le foncteur $\IndQp[P^-]$ induit un isomorphisme $\ke$-linéaire
\begin{equation*}
\Ext^1_{L(\Qp)} \left( \pi,\IndQpL \chi \right) \iso \Ext^1_{G(\Qp)} \left( \IndQp[P^-] \pi,\IndQp \chi \right).
\end{equation*}
\end{enumerate}
\end{prop}

\begin{rema}
Si $\pi$ est supersingulière, alors $\pi$ est supercuspidale d'après \cite[Corollary 1.2 (ii)]{Her} lorsque $L=\GL_n$ et \cite[Theorem 1.2]{Abe} dans le cas général déployé.
On a donc $L=T$ ou $\pi_{N_L(\Qp)}=0$ par réciprocité de Frobenius (car $\pi_{N_L(\Qp)}$ est admissible de longueur finie d'après \cite[Corollary 3.6.7]{Em2} et \cite[Theorem 2.3.8 (1)]{Em1}).
\end{rema}

\begin{proof}
On suppose $P=B$.
Comme $\pi$ est absolument irréductible et $T(\Qp)$ est commutatif, $\pi$ est de dimension $1$.
Le point (i) est l'analogue modulo $p$ de \cite[Théorème 1.1 (ii)]{JH}.
Si $\pi \neq \chi$, alors la source et le but de l'isomorphisme du point (ii) sont nuls d'après \cite[Proposition 5.1.4 (i)]{JH} et l'analogue modulo $p$ de \cite[Théorème 1.1 (i)]{JH} respectivement.
Si $\pi = \chi$, alors le point (ii) se déduit de \cite[Théorème 1.2]{JHC} en remarquant que lorsque $p=2$, $\IndQp \chi$ est irréductible par hypothèse, donc $s_\alpha(\chi) \neq \chi$ pour tout $\alpha \in \Delta$ (voir \cite[Remarque 3.1.3]{JHC}).

On suppose $P \neq B$ et $(s_\alpha(\chi) \cdot (\oma)) \circ \beta^\vee \neq 1$ pour tous $\alpha \in \Delta-\Delta_L$ et $\beta \in \Delta_L$.
Les représentations $\IndQpL s_\alpha(\chi) \cdot (\oma)$ avec $\alpha \in \Delta-\Delta_L$ sont irréductibles d'après \cite[Théorème 4]{Oll} lorsque $L=\GL_n$ et \cite[Theorem 1.3]{Abe} dans le cas général déployé.
En utilisant la proposition \ref{prop:HnOrdirr}, on en déduit du théorème \ref{theo:HnOrd} avec $n=1$, $A=\ke$ et $U=\chi$ un isomorphisme $L(\Qp)$-équivariant
\begin{equation*}
\bigoplus_{\alpha \in \Delta-\Delta_L} \IndQpL s_\alpha(\chi) \cdot (\oma) \iso \HOrdQp \left( \IndQp \chi \right).
\end{equation*}
Or $\pi_{N_L(\Qp)}=0$ par hypothèse, donc par réciprocité de Frobenius on a
\begin{equation*}
\Hom_{L(\Qp)} \left( \pi,\IndQpL s_\alpha(\chi) \cdot (\oma) \right) = 0
\end{equation*}
pour tout $\alpha \in \Delta-\Delta_L$.
En utilisant la suite exacte \eqref{SEadj}, on en déduit le point (ii).
\end{proof}

\begin{prop} \label{prop:conjF}
On suppose $F \neq \Qp$.
Soient $\pi$ une représentation lisse localement admissible de $L(F)$ sur $\ke$ et $\chi : T(F) \to \ke^\times$ un caractère lisse.
Le foncteur $\IndF[P^-]$ induit un isomorphisme $\ke$-linéaire
\begin{equation*}
\Ext^1_{L(F)} \left( \pi,\IndFL \chi \right) \iso \Ext^1_{G(F)} \left( \IndF[P^-] \pi,\IndF \chi \right).
\end{equation*}
\end{prop}

\begin{proof}
L'isomorphisme se déduit de la suite exacte \eqref{SEadj} en utilisant le corollaire \ref{coro:HnOrdF} avec $n=1$,  $A=\ke$ et $U=\chi$.
\end{proof}

\begin{rema}
Par réduction modulo $\pe^k$ et dévissage, on prouve les résultats analogues $p$-adiques (c'est-à-dire dans les catégories de représentations continues unitaires admissibles sur $E$) avec $\pi$ résiduellement de longueur finie.
Pour l'analogue $p$-adique de la proposition \ref{prop:conjQp}, il faut supposer $s_\alpha(\chi) \cdot (\varepsilon^{-1} \circ \alpha) \neq \chi$ pour tout $\alpha \in \Delta$ ou $p \neq2$, et les représentations $\IndQpL s_\alpha(\chi) \cdot (\varepsilon^{-1} \circ \alpha)$ avec $\alpha \in \Delta-\Delta_L$ topologiquement irréductibles.
\end{rema}

\bibliographystyle{alpha-fr}
\bibliography{conjecture}

\end{document}